\newcommand{\chara}{\mathrm{char}\hspace{1pt}}
\newcommand{\ttop}{\mathrm{top}\hspace{1pt}}
\newcommand{\CMP}{{\textup{CM\hspace{1pt}}}}
\newcommand{\uCMP}{{\underline{\textup{CM}}\hspace{1pt}}}
\newcommand{\Hom}{{\textup{Hom}}}
\newcommand{\End}{{\textup{End}}}
\newcommand{\Ext}{{\textup{Ext}}}
\renewcommand{\mod}{{\textup{mod}\hspace{1pt}}}
\newcommand{\rad}{{\textup{rad}\hspace{1pt}}}
\newcommand{\add}{{\textup{add}\hspace{1pt}}}
\newcommand{\injdim}{{\textup{inj.dim}\hspace{1pt}}}
\newcommand{\projdim}{{\textup{proj.dim}\hspace{1pt}}}
\newcommand{\raw}{\rightarrow}
\newcommand{\Aa}{\mathbb{A}}
\newcommand{\tAa}{\tilde{\mathbb{A}}}
\newcommand{\Bb}{\mathcal{B}}
\newcommand{\Cc}{\mathcal{C}}
\newcommand{\Bp}{\mathcal{B}_p}
\newcommand{\Bq}{\mathcal{B}_q}
\newcommand{\Db}{\mathcal{D}^b}
\newcommand{\xra}{\xrightarrow}
\newcommand{\ra}{\rightarrow}
\newcommand{\Tt}{\mathcal{T}}
\newcommand{\Pp}{\mathcal{P}}
\newtheorem{teo}{Theorem}[section]
\newtheorem{lema}[teo]{Lemma}
\newtheorem{teorema}[teo]{Theorem}
\newtheorem{corolario}[teo]{Corollary}
\newtheorem{proposicion}[teo]{Proposition}
\newtheorem{ejemplo}[teo]{Example}
\newtheorem{remark}[teo]{Remark}
\newtheorem{definicion}[teo]{Definition}
\newtheorem{teorema*}{Theorem}
\newtheorem{corolario*}{Corollary}
\begin{document}\date{\today. Key words: 2-Calabi-Yau tilted algebras, Jacobian algebras, Gentle algebras.}

\title{Gentle $m$-Calabi-Yau tilted algebras}
\author{Ana Garcia Elsener}

\thanks{Supported by CONICET and PICT 2013-0799 ANPCyT. The author would like to thank: Ralf Schiffler and Pamela Su\'arez for reading the first version of this note, Juan Ver\'on for helping with the graphics, and the referee for pointing out references and previous mistakes.}

\begin{abstract} 

We prove that all gentle 2-Calabi-Yau tilted algebras (over an algebraically closed field $k$, $\chara k \neq 3$) are Jacobian, moreover their bound quiver can be obtained via block decomposition. For two related families, the $m$-cluster-tilted algebras of type $\Aa$ and $\tAa$,  we prove that a module $M$ is stable Cohen-Macaulay if and only if $\Omega^{m+1} \tau M \simeq M$. 
\end{abstract}

\maketitle

\section*{Introduction}

Gentle algebras are a class of finite dimensional algebras whose module (and derived) category is well understood. These algebras have good properties, they are Gorenstein \cite{GR}, tame, and their module category is described via strings and parametrized bands \cite{ButRi}.
On the other hand, 2-Calabi-Yau (2-CY for short) tilted algebras are generalization of the concept of cluster-tilted
algebras. A cluster-tilted algebra is the endomorphism algebra of a cluster-tilting object in the cluster category
of a hereditary algebra, 2-CY tilted algebras are obtained by replacing the cluster category by a
2-CY triangulated category. These algebras are Goresntein of dimension at most one \cite{KR}. Cluster categories and cluster-tilted algebras were introduced in \cite{BMRRT,CCS,BMR}. Jacobian algebras were defined in \cite{DWZ}, these algebras are defined by a quiver with potential $(Q,W)$.
In \cite{Am} were introduced 2-CY categories $\Cc_{(Q,W)}$ associated to quivers with potential, in such way that Jacobian algebras are obtained as 2-CY tilted algebras arising from $\Cc_{(Q,W)}$.

A well known class of gentle 2-CY tilted algebras are the Jacobian algebras arising from unpunctured surfaces defined in \cite{ABCP}. This family includes the cluster-tilted algebras of type $\Aa$ and $\tAa$. A related family of gentle algebras, that also have a geometric realization via unpunctured surfaces, are the $m$-cluster-tilted algebras of types $\Aa$ and $\tAa$ \cite{T,Ba,Gub}.

In section 2, we characterize the gentle 2-CY tilted $k$-algebras, in the case $\chara k \neq 3$ (See Remark \ref{remark char 3}). By \cite{GS}, the modules in the singularity category of a $2$-CY tilted algebra are exactly those satisfying the formula $\Omega^2\tau M \simeq M$. The corresponding modules over gentle algebras were studied in \cite{Ka}. These results motivate the following theorem.

\begin{teorema*} Let $\Lambda=kQ/I$ be a gentle algebra of Gorenstein dimension at most one and such that $\Omega^2 \tau M \simeq M$ for all $M \in \uCMP(\Lambda)$. Then, $(Q,I)$ is obtained via block decomposition, matching blocks of type I, II and loop:

\begin{center}
\includegraphics{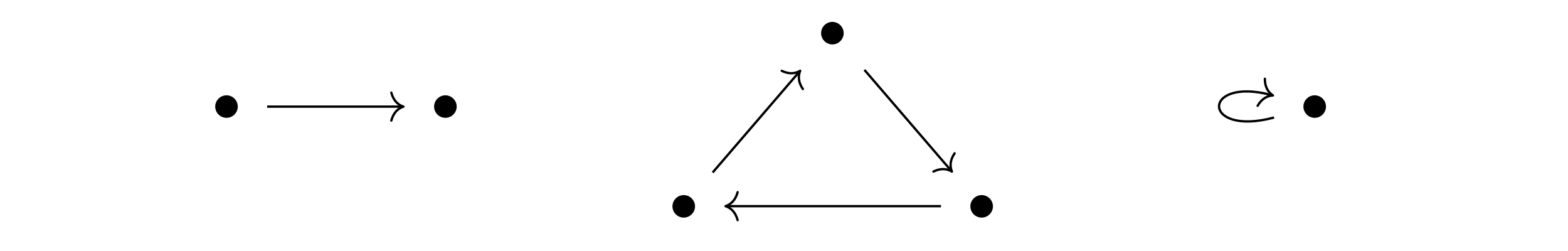}
\end{center}


\end{teorema*}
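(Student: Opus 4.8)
The plan is to prove this as a structure theorem by reading off the shape of the quiver with relations from the two hypotheses: the Gorenstein dimension bound and the self-map condition $\Omega^2\tau M \simeq M$ on the stable Cohen–Macaulay objects.

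The plan is to read off the shape of $(Q,I)$ from the combinatorial model for $\uCMP(\Lambda)$ of a gentle algebra, and to show that the two hypotheses pin down the local configuration at every vertex.

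First I would recall the description of the stable Cohen--Macaulay category of a gentle algebra. By Kalck's work \cite{Ka}, the non-projective indecomposable Gorenstein projective $\Lambda$-modules are indexed by the \emph{relation cycles} of $(Q,I)$, that is, the cyclic sequences of arrows $c=(\beta_1,\dots,\beta_n)$ (read up to rotation) such that every consecutive composition $\beta_{i+1}\beta_i$, including $\beta_1\beta_n$, lies in $I$; each such cycle $c$ of length $n=\ell(c)$ contributes a factor equivalent to $\underline{\mod}\,k[x]/(x^{n})$ to $\uCMP(\Lambda)$. On each such factor the syzygy satisfies $\Omega^2\simeq\mathrm{id}$ at the level of objects, so that $\Omega^2\tau M\simeq M$ is equivalent to requiring that $\tau$ act as the identity on $\uCMP(\Lambda)$. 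This reduction is the starting point.

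Second I would make the action of $\tau$ on these modules explicit. On string and band modules over gentle algebras both $\tau$ and $\Omega$ admit a purely combinatorial description (shifting the defining walk), and for the Gorenstein projectives attached to a relation cycle $c$ the translate $\tau$ permutes the indecomposables within the factor of $c$ according to how the arrows entering and leaving the vertices of $c$ are organized. The condition $\tau M\simeq M$ then becomes a local constraint on each relation cycle: the arrows not belonging to $c$ that attach to its vertices must be balanced so that the permutation induced by $\tau$ is trivial. I expect this to force each relation cycle to be short, or to be a loop, which is exactly where the three block types originate.

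Third I would bring in the Gorenstein dimension hypothesis. Since gentle algebras are always Gorenstein \cite{GR}, the dimension is computed from the lengths of the maximal paths that avoid, and the maximal paths contained in, the relations; requiring it to be at most one bounds these lengths and, together with the gentle axioms (at most two arrows in and out of each vertex, and the compatibility of $I$ with arrows), severely limits how the relation cycles and the remaining arrows can meet at a vertex. Carrying out the resulting case analysis at a single vertex should leave only the configurations appearing in blocks of type I, II and loop. Finally I would assemble the local pictures into a global decomposition: one shows that distinct relation cycles share at most a vertex, that the non-cyclic arrows are forced into the type I pattern, and that gluing the admissible local pieces along shared vertices reproduces precisely the block construction of the statement. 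The main obstacle I anticipate is the middle step---controlling $\tau$ on each relation cycle finely enough to extract the exact balancing condition, and then ruling out configurations that are admissible locally but create a forbidden $\tau$-orbit or raise the Gorenstein dimension globally. It is the interplay between the cyclic condition, which governs $\tau$, and the path-length condition, which governs the Gorenstein dimension, where the argument is most delicate.
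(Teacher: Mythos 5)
Your first and third steps are sound in outline: using Geiss--Reiten to translate the Gorenstein bound into the statement that there is no critical path of length $\geq 2$, hence that every zero-relation lies in a saturated cycle, is exactly what the paper does in part (1) of its key lemma. The fatal problem is your reduction in the second step. Kalck's theorem does \emph{not} present the factor of $\uCMP(\Lambda)$ attached to a saturated cycle $c$ as $\underline{\mod}\,k[x]/(x^{\ell(c)})$; it identifies it with $\Db(k)/[\ell(c)]$, whose $\ell(c)$ indecomposables $M(u_1),\dots,M(u_{\ell(c)})$ are permuted cyclically by the syzygy, $\Omega M(u_i)\simeq M(u_{i+1})$. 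So $\Omega$ has order $\ell(c)$ on that factor, not $2$, and the claimed equivalence ``$\Omega^2\tau M\simeq M$ for all $M\in\uCMP(\Lambda)$ iff $\tau$ acts as the identity'' is false. Worse, it would prove the wrong theorem: for the saturated $3$-cycle (the type II block, which the statement must \emph{allow}) one has $\tau M(u_i)\not\simeq M(u_i)$ even though $\Omega^2\tau M(u_i)\simeq M(u_i)$, so your criterion would exclude precisely the configurations the theorem is built from.

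What the paper actually extracts from the hypothesis is a periodicity of order $3$, not $1$ or $2$. One computes a minimal projective presentation of $M(u_i)$, applies the Nakayama functor to get $\tau M(u_i)=M(v_{i+1})$, and then takes one syzygy: $\Omega M(v_{i+1})=M(w'_{i+1})\oplus M(u_{i+2})$, hence $\Omega^2\tau M(u_i)=\Omega M(w'_{i+1})\oplus M(u_{i+3})$. Indecomposability of $M(u_i)$ forces $\Omega M(w'_{i+1})=0$ and $M(u_{i+3})\simeq M(u_i)$, i.e.\ $S(x_{i+3})=S(x_i)$ for all $i$; combined with gentleness and finite-dimensionality this leaves only genuine $3$-cycles on three distinct vertices and single loops with $\delta^2\in I$, which is where the type II and loop blocks come from. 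Your outline never produces the shift by $3$, and the ``balancing condition on $\tau$'' you defer to is exactly the computation that cannot be skipped. Until the second step is replaced by this (or an equivalent) explicit calculation of $\Omega^2\tau$ on the modules $M(u_i)$, the argument does not go through.
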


Immediately, we obtain the next result.

\begin{corolario*}
If $\Lambda = kQ/I$ is gentle 2-CY tilted and $\chara k \neq 3$, then $\Lambda$ is Jaciobian. 
\end{corolario*}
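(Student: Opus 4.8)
The plan is to verify that every gentle 2-CY tilted algebra satisfies the two hypotheses of the Theorem, and then to equip its bound quiver with an explicit potential. First, since 2-CY tilted algebras are Gorenstein of dimension at most one by \cite{KR}, Buchweitz's theorem identifies the stable category of Cohen-Macaulay modules $\uCMP(\Lambda)$ with the singularity category of $\Lambda$. By \cite{GS}, a module over a 2-CY tilted algebra belongs to the singularity category precisely when $\Omega^2\tau M \simeq M$; consequently every $M \in \uCMP(\Lambda)$ satisfies $\Omega^2\tau M \simeq M$. Thus $\Lambda$ meets both conditions of the Theorem, and I may conclude that $(Q,I)$ is obtained via block decomposition using only blocks of type I, II and loop.

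It then remains to produce a potential $W$ on $Q$ whose Jacobian algebra is isomorphic to $\Lambda$. I would define $W$ locally: to each gluing of two blocks I assign the $3$-cycle created at the shared vertex, and to each loop block I assign its associated cubic term. Summing these local contributions gives a potential $W$, and the task is to check that the Jacobian ideal $J(W)$ coincides with the gentle ideal $I$. Because the relations of a gentle algebra are all monomial of length two and each such relation is localized at a single glued vertex, one verifies block by block that the cyclic derivatives $\partial_a W$ reproduce exactly the zero-relations generating $I$, and that the block decomposition introduces no commutativity relations. Once the equality $J(W) = I$ holds on every block, the local pieces assemble to a global isomorphism $kQ/J(W) \cong \Lambda$, exhibiting $\Lambda$ as the Jacobian algebra of $(Q,W)$.

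The main obstacle, and the source of the hypothesis $\chara k \neq 3$, is the loop block. There a loop $\ell$ satisfying the gentle relation $\ell^2 = 0$ must be recovered from a potential, and the natural term is $\ell^3$, whose cyclic derivative is $3\ell^2$. This generates the relation $\ell^2 = 0$ in $J(W)$ exactly when $3$ is invertible in $k$, that is, when $\chara k \neq 3$ (cf.\ Remark \ref{remark char 3}). Away from the loop blocks the verification that $\partial_a W$ returns the correct length-two relations is purely combinatorial and characteristic-independent; the delicate point is confirming that this single cubic contribution does not interfere with the $3$-cycles glued at an adjacent vertex and that no further relations are created. Handling this overlap carefully, together with the characteristic restriction for the cube, is the crux of the argument.
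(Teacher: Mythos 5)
Your overall strategy is exactly the paper's: Proposition \ref{prop KR} and Theorem \ref{teorema ralf} place $\Lambda$ under the hypotheses of Theorem \ref{Teorema}, the bound quiver is then gentle-block-decomposable, and an explicit potential built from $3$-cycles and cubes of loops recovers $I$ via cyclic derivatives, with $\chara k \neq 3$ needed precisely because $\partial_\delta(\delta^3)=3\delta^2$.

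However, your description of where the potential terms and the relations live is wrong, and as written the construction does not parse. You index the $3$-cycle terms of $W$ by ``gluings of two blocks'' and say each length-two relation ``is localized at a single glued vertex.'' This inverts the logic of block decomposition: gluing outlets creates no cycles and no relations whatsoever --- that is the whole point of the construction. The relations of $\Lambda$ are internal to the blocks: $\alpha\beta=\beta\gamma=\gamma\alpha=0$ inside each type II block (which is itself a saturated $3$-cycle, present whether or not it is glued to anything), and $\delta^2=0$ inside each loop block; a generic gluing --- say of two type I blocks --- has no $3$-cycle ``created at the shared vertex'' to assign, while an unglued type II block still needs its cubic term. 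The correct potential, as in the paper, is $W=\sum_i \alpha_i\beta_i\gamma_i+\sum_j \delta_j^3$, indexed by the blocks of type II and of type loop. Once this is fixed, the ``interference'' you call the crux of the argument dissolves immediately: every arrow of $Q$ belongs to exactly one block, hence appears in at most one summand of $W$, so for each arrow $a$ the derivative $\partial_a W$ is exactly the relation of the block containing $a$ (and is zero for type I arrows). Consequently the Jacobian ideal is generated by the block relations, with the single caveat $3\delta^2=0 \Leftrightarrow \delta^2=0$ requiring $\chara k\neq 3$, and $J(W)=I$ follows with no overlap to handle.
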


In section 3, we prove a result generalizing the formula that characterizes the modules in the singularity category of $2$-CY tilted algebras, in the context of gentle $m$-cluster tilted algebras.

\begin{teorema*}\label{teo m tipo a} Let $\Lambda$ be a $m$-cluster tilted algebra of type $\Aa$ or $\tAa$. Then,
\begin{enumerate}
\item $\Lambda$ is Gorenstein of dimension $d \leq m$.
\item $N \in \uCMP(\Lambda)$ if and only if $\Omega^{m+1} \tau N = N$.
\end{enumerate}
\end{teorema*}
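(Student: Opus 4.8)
The plan is to treat the two parts separately, using throughout that $m$-cluster-tilted algebras of type $\Aa$ and $\tAa$ are gentle, so that syzygies and AR-translates of string modules can be computed combinatorially on strings.

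For part (1), I would start from the fact that gentle algebras are Gorenstein (GR), so the Gorenstein dimension $d$ is the common value of the injective dimensions of the indecomposable projectives (equivalently, the projective dimensions of the indecomposable injectives). For a gentle algebra these dimensions are controlled by the maximal \emph{paths of relations}, i.e.\ maximal sequences of consecutive arrows $\alpha_1\cdots\alpha_t$ with each $\alpha_i\alpha_{i+1}\in I$: I would show $\injdim P_i$ is bounded by the length of the longest such relation-path issuing from $i$. Then, using the block decomposition (or the unpunctured-surface model) of these algebras, I would check that the relevant cycles have length $m+1$ with relations being the length-$m$ subpaths, so that every maximal relation-path contributes at most $m$, whence $d\le m$. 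The case $m=1$ recovers the familiar statement that the $2$-CY cluster-tilted algebras of these types are Gorenstein of dimension $\le 1$.

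For part (2), the ``if'' direction is the easy one and follows from (1): over an Iwanaga--Gorenstein algebra of dimension $d$, the $d$-th syzygy $\Omega^d X$ of any module $X$ is Gorenstein-projective, i.e.\ lies in $\CMP(\Lambda)$. Hence if $\Omega^{m+1}\tau N\simeq N$, then since $m+1>d$ we may write $N\simeq\Omega^{d}\bigl(\Omega^{m+1-d}\tau N\bigr)$, exhibiting $N$ as a $d$-th syzygy; therefore $N\in\CMP(\Lambda)$, and since projective summands vanish in the stable category, $N\in\uCMP(\Lambda)$.

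For the ``only if'' direction I must show that on $\uCMP(\Lambda)$ one has $\Omega^{m+1}\tau\simeq\mathrm{id}$, equivalently $\tau\simeq\Omega^{-(m+1)}$. Here I would invoke Kalck's description (Ka) of the Gorenstein-projective modules over a gentle algebra: the indecomposable objects of $\uCMP(\Lambda)$ are the string modules $M_w$ whose string $w$ runs along a cyclic configuration of relations. I would then compute, on these strings, both $\Omega$ (syzygies of string modules are again string modules, obtained by the standard hook/cohook surgery) and $\tau$ (computed by adding and deleting hooks and cohooks), and verify that advancing $m+1$ steps of $\Omega$ traverses exactly one relation cycle, so that $\Omega^{m+1}$ agrees with $\tau^{-1}$; since both functors are additive it suffices to check this on indecomposables. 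The main obstacle is precisely this computation: tracking how the syzygy and the AR-translate move a string around the relation cycles of the block/surface model, and pinning down that the period is exactly $m+1$ rather than a divisor or multiple of it. The $\tAa$ case needs extra care, because of the band modules and the two boundary components of the annulus, which must be shown not to interfere with the characterization.
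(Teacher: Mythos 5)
Your overall architecture coincides with the paper's: part (1) via Geiss--Reiten's computation of the Gorenstein dimension of a gentle algebra together with the combinatorics of $(m+2)$-angulations (Proposition \ref{m angulaciones}, Lemma \ref{lema1}), the ``if'' direction of (2) via the $d$-th syzygy criterion over a Gorenstein algebra (Remark \ref{rema gorenstein}), and the ``only if'' direction via Kalck's description of $\uCMP$ plus explicit string computations (Theorem \ref{main}). However, in the ``only if'' direction there is a genuine gap, and it is not the one you flag. The ``period'' is not the problem: on Kalck's modules the syzygy simply rotates the saturated $(m+2)$-cycle, $\Omega^t M(u_i)=M(u_{i+t})$ with indices taken modulo $m+2$, so no divisor-or-multiple subtlety arises. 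The real problem is that $\tau$ does \emph{not} preserve $\uCMP$: the paper computes $\tau M(u_i)=M(v_{i+1})$, a string module that is not Gorenstein projective, and whose syzygy decomposes as $\Omega M(v_{i+1})=M(w'_{i+1})\oplus M(u_{i+2})$. Hence $\Omega^{m+1}\tau M(u_i)=\Omega^m M(w'_{i+1})\oplus M(u_i)$, and the entire proof reduces to showing that the parasitic summand satisfies $\Omega^m M(w'_{i+1})=0$. This is where the work lies: the paper proves a dichotomy (Lemma \ref{lemaK}: over a gentle Gorenstein algebra of dimension $d\geq 1$, an indecomposable direct summand of $\rad P(x)$ either lies in $\uCMP$ or has projective dimension at most $d-1$), and then rules out the first alternative for $M(w'_{i+1})$ by a two-case combinatorial argument on how an arrow of a saturated cycle could meet the strings $v_{i+1}$ and $w'_{i+1}$. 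Your plan of ``verifying that $\Omega^{m+1}$ agrees with $\tau^{-1}$ on $\uCMP$'' never confronts this extra summand, and without that step the argument does not close.

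Two smaller points. In part (1), the saturated cycles of these algebras have length $m+2$, not $m+1$, and relations in a gentle algebra are always quadratic; the bound you actually need is that a chain of consecutive relations \emph{outside} a saturated cycle involves at most $m$ arrows. Moreover you must restrict to chains beginning with a gentle arrow (Geiss--Reiten's critical paths): relation chains winding around a saturated cycle are unbounded, so your claim that $\injdim P_i$ is bounded by the longest relation path issuing from $i$ is false as stated for vertices on cycles. Finally, your worry about band modules in type $\tAa$ is moot: by Kalck's theorem (Theorem \ref{teorema de kalck}), for any gentle algebra every indecomposable object of $\uCMP$ is a string module $M(u_i)$ attached to a saturated cycle, so bands never enter the characterization.
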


The paper is organized as follows. In section 1, we recall basic facts about gentle algebras, 2-CY tilted algebras and $m$-cluster-tilted algebras. Section 2 is devoted to our characterization of gentle 2-Calabi-Yau tilted algebras. The study of the modules in singularity categories over $m$-cluster-tilted algebras is given in section 3.

\section{Preliminaries}\label{sect 1}

Throughout these notes, let $k$ be an algebraically closed field and let $Q=(Q_0,Q_1)$ be a finite quiver, where $Q_0$ is the set of vertices and $Q_1$ the set of arrows. Let $s,t \colon Q_1 \ra Q_0$ be the functions that indicate the source and the target of each arrow, respectively. We will only consider \emph{finite-dimensional basic $k$-algebras}. Every finite- dimensional basic $k$-algebra is isomorphic to a quotient $kQ/I$, where $I$ is an admissible ideal. The pair $(Q,I)$ is called a \emph{bound quiver}. 
For more details, see \cite[Chapter III]{ASS}.

\subsection{Gentle algebras}

We recall the definition of gentle algebra and results due to Geiss and Reiten \cite{GR}, and Kalck \cite{Ka}. 

\begin{definicion} A $k$-algebra $\Lambda = k Q/I$  is gentle if 
\begin{enumerate}
\item[(G1)] For each vertex $x_0 \in Q_0$ there are at most two arrows such that $x_0$ is their source, and at most two arrows such that $x_0$ is their target.
\item[(G2)] The ideal $I$ is generated by paths  of length $2$. 
\item[(G3)] For each arrow $\beta$ there is at most one arrow $\alpha$ and at most one arrow
$\gamma$ such that $\alpha \beta \in I$ and $\beta \gamma \in I$.
\item[(G4)] For each arrow $\beta$ there is at most one arrow $\alpha$ and at most one arrow
$\gamma$ such that $\alpha \beta \notin I$ and $\beta \gamma \notin I$.
\end{enumerate} 
\end{definicion}

We will often refer to the generators in $I$ as \emph{zero-relations}.

\begin{definicion} Let $\Lambda = kQ/I$ be a gentle algebra.

\begin{enumerate}

\item[(a)] A cycle $x_1 \xra{\alpha_1}  \cdots \ra x_{n}\xra{\alpha_n} x_1 $ is saturated if $\alpha_i \alpha_{i+1} \in I$, for $i$ an integer modulo $n$. In particular, a saturated loop is an arrow $\delta$ such that $s(\delta)=t(\delta)$ and $\delta^2 \in I$. 

\item[(b)] An arrow $\beta $ is gentle if there is no other arrow $\alpha$ such that $\alpha \beta \in I$.

\item[(c)] A path $\alpha_1 \ldots \alpha_n$ is formed by consecutive relations if $\alpha_i \alpha_{i+1} \in I$ for $1 \leq i < n$.

\item[(d)] A path $\alpha_1 \ldots \alpha_n$ is critical if it is formed by consecutive relations and $\alpha_1 $ is a gentle arrow.

\end{enumerate}

\end{definicion}

When there is no gentle arrow, we set $n(\Lambda)=0$. When there is a gentle arrow, let $n(\Lambda)$ be the maximal length computed over all critical paths. This number is bounded, since $Q$ is finite.  

Let $\Omega$ be the usual syzygy operator, $\tau$ the Auslander-Reiten (AR) translation, and $D = \Hom_k ( - ,k)$.

\begin{definicion} A $k$-algebra $\Lambda$ is Gorenstein if $\injdim \Lambda = \projdim D(\Lambda^{op})=d$ for some non-negative integer $d$. In this case we say that $\Lambda$ is Gorenstein of dimension $d$.  

\end{definicion}

\begin{teorema} \label{lema GR} \cite{GR} Let  $ \Lambda= kQ/I$ be a gentle algebra with $n(\Lambda)$ the maximum
length of critical paths. Then $\injdim \Lambda = n(\Lambda) =
\projdim D(\Lambda^{op})$ if $n(\Lambda) > 0$, and $\injdim \Lambda = \projdim D(\Lambda^{op}) \leq 1$ if $n(\Lambda) = 0$. In particular, $\Lambda$ is Gorenstein.

\end{teorema}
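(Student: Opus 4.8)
The plan is to deduce both equalities from a single statement about projective dimensions of injective modules, applied to $\Lambda$ and to $\Lambda^{op}$. By definition $D(\Lambda^{op})$ is the injective cogenerator among left $\Lambda$-modules, so $\projdim D(\Lambda^{op}) = \max_x \projdim I_x$, the maximum taken over the indecomposable injective $\Lambda$-modules $I_x$. On the other hand, since $D$ is a duality interchanging projectives and injectives, an injective coresolution of ${}_\Lambda\Lambda$ dualizes to a projective resolution of $D\Lambda$ over $\Lambda^{op}$, whence $\injdim \Lambda$ equals the maximal projective dimension of an indecomposable injective $\Lambda^{op}$-module. As $\Lambda^{op}$ is again gentle, I first record the symmetry $n(\Lambda^{op}) = n(\Lambda)$: extending a path formed by consecutive relations maximally on both sides, it begins at a gentle arrow and ends at a right-gentle arrow, and reversing the path exchanges these two conditions, so the maximal length is unchanged. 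It then suffices to prove, for an arbitrary gentle algebra $\Gamma$, that $\max_x \projdim I_x = n(\Gamma)$ when $n(\Gamma) > 0$ and $\max_x \projdim I_x \le 1$ when $n(\Gamma) = 0$; applying this to $\Gamma = \Lambda$ and $\Gamma = \Lambda^{op}$ and using $n(\Lambda^{op}) = n(\Lambda)$ yields both values and, by their finiteness, that $\Lambda$ is Gorenstein.

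The heart of the matter is the computation of $\projdim I_x$ for a single indecomposable injective. A gentle algebra is a string algebra, so each $I_x = D(\Gamma e_x)$ is a string module with socle $S_x$, and the syzygy of a string module is again a direct sum of string modules, given by an explicit combinatorial rule on the underlying walks \cite{ButRi, Ka}. Running the minimal projective resolution $\cdots \to P^1 \to P^0 \to I_x \to 0$ through this rule, I would show that after the first step each syzygy is a direct sum of simple modules, and that passing from $\Omega^i I_x$ to $\Omega^{i+1} I_x$ advances one arrow along a maximal chain of consecutive relations determined by the string of $I_x$. The resolution becomes stationary --- the syzygy is projective and the next one vanishes --- exactly when such a chain can no longer be prolonged by a relation, which is the defining endpoint of a critical path. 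Thus $\projdim I_x$ equals the maximal length of the critical paths so attached to $x$, and since every critical path is obtained in this way, taking the maximum over all $x$ gives $n(\Gamma)$ whenever $n(\Gamma) > 0$.

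The case $n(\Gamma) = 0$ requires a separate argument. Here $\Gamma$ has no gentle arrow and, by the symmetry already used, no right-gentle arrow either, so every arrow is both preceded and followed by a relation and therefore lies on a saturated cycle; the marching mechanism above then never reaches a critical endpoint. For such algebras I would verify directly, once more at the level of strings, that every indecomposable injective is either projective or has projective first syzygy, which gives $\projdim I_x \le 1$ and hence the stated bound.

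The main obstacle lies in making the phrase ``advance one arrow per syzygy'' precise and uniform. One must analyse the first syzygy of $I_x$ carefully, since each of the (up to two) arms of the injective string contributes, pairing an incoming arm with an outgoing critical path through the gentle conditions; one must check that no band modules or unexpected splittings appear among the higher syzygies; and one must prove that a syzygy becomes projective precisely at the critical endpoint and not before. The degenerate case of the preceding paragraph is the other delicate point, as there the naive count does not terminate and the bound $\le 1$ has to be obtained from a hands-on description of the saturated-cycle structure.
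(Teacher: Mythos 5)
The paper does not actually prove this statement --- it is imported verbatim from \cite{GR} --- so the comparison is with the known Geiss--Reiten argument and with the places where the paper reuses its engine. Your overall route is the right one and essentially theirs: reduce both equalities to computing $\projdim$ of the indecomposable injectives over $\Gamma$ and $\Gamma^{op}$, then march minimal projective resolutions along chains of consecutive relations, terminating exactly where a chain can no longer be prolonged. Indeed the same mechanism appears inside this paper, in the proof of Lemma \ref{lemaK}: there one recursively produces arrows $\delta_1,\delta_2,\dots$ with each quadratic factor $\delta_k\delta_{k+1}$ in $I$, and the resolution stops precisely when no further $\delta_{k+1}$ exists. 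So the skeleton of your proposal is sound and matches the cited proof.

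Two of your specific assertions are false as written, though both are repairable. First, ``after the first step each syzygy is a direct sum of simple modules'' is wrong: over a gentle algebra the radical of an indecomposable projective is a direct sum of up to two \emph{uniserial path modules of arbitrary length}, so syzygies of string modules are direct sums of such uniserials, not of simples. For example, take $1\xrightarrow{\alpha}2\xrightarrow{\beta}3\xrightarrow{\gamma}4$ with $I=\langle \alpha\beta\rangle$: then $\Omega^2 I(1)=\Omega S(2)$ is the two-dimensional uniserial $M(\gamma)=P(3)$. The fix costs nothing --- what matters is only the \emph{top} $S(t(\delta_k))$ of each uniserial summand: by (G2)--(G4) that summand is non-projective iff there is a (then unique) arrow $\delta_{k+1}$ with $\delta_k\delta_{k+1}\in I$, in which case the next syzygy summand is the uniserial with top $S(t(\delta_{k+1}))$; this is the precise form of your ``advance one arrow per syzygy.'' Second, your symmetry argument ``extending maximally on both sides, it begins at a gentle arrow and ends at a right-gentle arrow'' fails when the chain of consecutive relations closes up into a saturated cycle, since such a chain has no endpoints at all; this is exactly the degenerate configuration your $n(\Gamma)=0$ paragraph depends on, so you cannot wave it through. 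The repair uses the uniqueness in (G3): relation-predecessors and relation-successors of arrows are unique, so a chain containing a gentle arrow can never enter a cycle (backward uniqueness would place the gentle arrow on the cycle and give it a relation-predecessor, a contradiction); hence every critical path terminates at a right-gentle arrow, giving $n(\Lambda^{op})=n(\Lambda)$, and the same determinism shows that $n(\Lambda)=0$ forces the absence of right-gentle arrows, which is what your last paragraph actually requires. With these two corrections, and granting the verifications you correctly flag as the remaining work (the first syzygy of $I_x$ pairing the two arms, the absence of bands --- automatic, since kernels of projective covers of string modules are again string modules --- and the projectivity of $\Omega I_x$ when every arrow lies on a saturated cycle), your outline becomes a correct proof along the same lines as \cite{GR}.
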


An algebra $\Lambda = kQ/I$
where $I$ is generated by paths and $(Q, I)$ satisfies the two conditions (G1)
and (G4) is called a \emph{string algebra}, thus every gentle algebra is
a string algebra.  A \emph{string} in $\Lambda$ is by definition a reduced walk $w$ in $Q$ avoiding the
zero-relations, thus $w$ is a sequence $x_1 \overset{\alpha_1}{\longleftrightarrow} x_2 \overset{\alpha_2}{\longleftrightarrow} \cdots \overset{\alpha_n}{\longleftrightarrow} x_{n+1}$ where the $x_i$ are vertices of $Q$ and each $\alpha_i$ is an arrow between the vertices $x_i$ and $x_{i+1}$ in either direction such that there is no $\overset{\beta}{\longrightarrow} \overset{\beta}{\longleftarrow}$, and no $\overset{\beta_1}{\longleftarrow} \cdots \overset{\beta_t}{\longleftarrow}$ or $ \overset{\beta_1}{\longrightarrow} \cdots \overset{\beta_t}{\longrightarrow}$ with $\beta_1 \ldots \beta_t \in I$. If the first and the last vertex of $w$ coincide, then the string is cyclic. A \emph{band} is a cyclic string $b$ such that each power $b^n$ is a cyclic string but $b$ is not a power of some string. The classification of indecomposable modules over a string
algebra $\Lambda = kQ/I$ is given by Butler and Ringel in terms of strings and bands in $(Q,I)$. Each string $w$ defines an indecomposable module $M(w)$, called a \emph{string module}, and each band $b$ defines a family of indecomposable modules $M(b,\lambda,n)$, called \emph{band modules}, with parameters $\lambda \in k$ and $n \in \mathbb{N}$. We refer to \cite{ButRi} for the definition of string and band modules. 

\smallskip

Consider the subcategory of maximal Cohen-Macaulay modules (also called Gorenstein projective modules) defined by $\CMP(\Lambda)=\{ M \colon \Ext^i_\Lambda (M, \Lambda)=0 , \ for \ all \ i>0 \}.$ 
The stable category $\uCMP(\Lambda)=\CMP(\Lambda) / (\Pp)$, where $(\Pp)$ denotes the ideal of morphisms factoring through a projective $\Lambda$-module, is the \emph{singularity category} of $\Lambda$. 
Let $x_1 \xra{\alpha_1} \cdots \xra{\alpha_{n-1}} x_n \xra{\alpha_n} x_1$ be a saturated cycle, the indecomposable proyective module $P(x_i)$ and the indecomposable injective module $I(x_i)$ are string modules given by $P(x_i)=M(u_i^{-1} \alpha_i u_{i+1})$ and $I(x_i)= M(v_{i-1} \alpha_{i-1} v_{i}^{-1})$ (see Figure \ref{kalck1}).

\begin{figure}[h!]


\begin{center}
\begin{tikzcd}[column sep=small]
\ \arrow[d,rightsquigarrow,"v_{i}"]& & \  \arrow[d,rightsquigarrow,"v_{i+1}"] & & \ \arrow[d,rightsquigarrow,"v_{i+2}"] \\ x_i \arrow[rr,"\alpha_i"] \arrow[d,rightsquigarrow,"u_i"] & & x_{i+1} \arrow[rr,"\alpha_{i+1}"] \arrow[d,rightsquigarrow,"u_{i+1}"]& & x_{i+2}\arrow[d,rightsquigarrow,"u_{i+2}"] \\
\ & & \ & & \
\end{tikzcd}
\end{center}
\caption{\sf Local situation for a saturated cycle. The path $u_i$ is the maximal path starting at the vertex $x_i$ and the path $v_i$ is the maximal path ending at $x_i$.}\label{kalck1}
\end{figure}

\begin{remark}\label{rema gorenstein} For a Gorenstein algebra $\Lambda$ of dimension $d$, a $\Lambda$-module $M$ is Cohen-Macaulay if and only if $M$ is a $d$-th syzygy, see \cite[Proposition 6.20]{Be00}. In this case each $\Lambda$-module either has infinite projective dimension or has projective dimension at most $d$.
\end{remark}

We are interested in computing projective resolutions and AR translations over the modules in $\uCMP(\Lambda)$. For that reason, we need the next result.

\begin{teorema}\cite[Theorem 2.5]{Ka}\label{teorema de kalck} Let $\Lambda = k Q/I$ be a gentle algebra. Let $x_1 \xra{\alpha_1} \cdots \xra{\alpha_{n-1}} x_n \xra{\alpha_n} x_1$ be a saturated cycle. The string module $M(u_i)$, where $u_i$ is the string starting at $x_i$ as in Figure \ref{kalck1}, is Cohen-Macaulay. Moreover, all indecomposable modules in $\uCMP(\Lambda)$ are obtained in such manner. 
\end{teorema}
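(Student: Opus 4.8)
The plan is to prove the two assertions separately: the Cohen--Macaulay property of $M(u_i)$ from a periodicity of syzygies, and the completeness of the list by an orbit argument reinforced with a combinatorial syzygy reduction.

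For the first assertion I would compute the syzygy of $M(u_i)$ directly. Using $P(x_i)=M(u_i^{-1}\alpha_i u_{i+1})$ from Figure \ref{kalck1}, the projective cover $P(x_i)\twoheadrightarrow M(u_i)$ restricts isomorphically on the arm $u_i$, and its kernel is the submodule generated by the image of $\alpha_i$ on the top, namely the string submodule supported on $u_{i+1}$. Here $\alpha_i u_{i+1}$ is a genuine string because by (G3) the only relation beginning with $\alpha_i$ is $\alpha_i\alpha_{i+1}$, while $u_{i+1}$, being the maximal path at $x_{i+1}$ in a saturated cycle, does not start with $\alpha_{i+1}$. Hence $\Omega M(u_i)\cong M(u_{i+1})$, and reading the indices modulo $n$ yields $\Omega^{n}M(u_i)\cong M(u_i)$, so that $M(u_i)\cong\Omega^{kn}M(u_i)$ for every $k\ge 1$. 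Taking $kn\ge d$, where $d$ is the Gorenstein dimension furnished by Theorem \ref{lema GR}, exhibits $M(u_i)$ as a $d$-th syzygy, and Remark \ref{rema gorenstein} gives $M(u_i)\in\CMP(\Lambda)$.

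For the converse I would first reduce to a single combinatorial claim. Since $\Lambda$ is Gorenstein, $\uCMP(\Lambda)$ is a triangulated category on which $\Omega$ is an auto-equivalence, hence a bijection on isomorphism classes of indecomposable objects; by the first part it sends $[M(u_i)]$ to $[M(u_{i+1})]$. Therefore, if some syzygy $\Omega^{j}N$ of an indecomposable $N\in\uCMP(\Lambda)$ is isomorphic to a cycle module $M(u_i)$, then $N\cong\Omega^{-j}M(u_i)\cong M(u_{i-j})$. It thus suffices to prove the following \emph{Subclaim}: every indecomposable non-projective $N\in\CMP(\Lambda)$ has a syzygy isomorphic to some $M(u_i)$. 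To establish it I would argue combinatorially, using that $N$ is a string or band module (Butler--Ringel \cite{ButRi}) and that, being non-projective and Cohen--Macaulay, it has infinite projective dimension, so no syzygy vanishes. Band modules are excluded at once when $d\ge 1$: every syzygy $\Omega M$ is a submodule of a projective, projectives over a string algebra are string modules, and submodules of string modules are direct sums of string modules, so no $d$-th syzygy can have a band summand; the self-injective case $d=0$ is elementary. For a string module I would compute $\Omega$ peak by peak, controlled by (G3)--(G4): each peak is resolved by prepending the unique maximal path completing the outgoing arrows into relations. Introducing a complexity measure---the length of the string beyond its cycle arms---that $\Omega$ strictly decreases unless the module is already a direct sum of cycle modules $M(u_i)$, and using that the resolution never terminates, I would conclude that $\Omega^{j}N$ is a direct sum of cycle modules for $j\gg 0$; since $\Omega$ preserves indecomposability on $\uCMP(\Lambda)$, this forces $\Omega^{j}N\cong M(u_i)$, proving the Subclaim.

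I expect the main obstacle to be this last step: designing a complexity measure that is monotone under $\Omega$ across every local configuration at a peak, and verifying that the cycle modules are exactly the configurations on which it stabilizes. The first assertion and the orbit reduction are essentially formal, and the band modules are disposed of cleanly for $d\ge 1$; the genuine work is the string syzygy analysis that forces every infinite-projective-dimension indecomposable onto the saturated cycles.
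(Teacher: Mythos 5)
First, a point of reference: the paper does not prove this statement; it is imported verbatim from \cite[Theorem 2.5]{Ka}, so your attempt can only be measured against Kalck's argument. Your first half is correct and complete: the identification $\Omega M(u_i)\cong M(u_{i+1})$ (the kernel of $P(x_i)=M(u_i^{-1}\alpha_i u_{i+1})\twoheadrightarrow M(u_i)$ is the submodule generated by the image of $\alpha_i$, which is uniserial on $u_{i+1}$ precisely because $u_{i+1}$ avoids $\alpha_{i+1}$), hence $\Omega^{n}M(u_i)\cong M(u_i)$, combined with Theorem \ref{lema GR} and Remark \ref{rema gorenstein}, does prove $M(u_i)\in\CMP(\Lambda)$. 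The reduction of the converse to your Subclaim via the autoequivalence $\Omega$ on $\uCMP(\Lambda)$ is also sound, and the exclusion of band modules is essentially right (though ``submodules of string modules are direct sums of string modules'' is itself an assertion needing proof; the cleaner route is that the minimal projective cover of a band module has projective kernel, so band modules have projective dimension at most one).

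The genuine gap is the Subclaim itself, and the mechanism you propose for it would fail. A complexity measure of the type ``length of the string beyond its cycle arms'' is \emph{not} monotone under $\Omega$: already for a simple module $S(x)$ one has $\Omega S(x)=\rad P(x)$, whose length can be arbitrarily larger than $1$, and more generally $\Omega$ replaces a string by maximal paths whose lengths bear no relation to those of the original string. What actually makes the induction work is a structural, not a metric, statement: (i) for a gentle algebra, the first syzygy of \emph{any} module is, up to projective summands, a direct sum of the special uniserial modules $C_\delta:=\langle\delta\rangle\subseteq P(s(\delta))$ indexed by arrows $\delta\in Q_1$ (the contributions of the ``valleys'' of a string turn out to be entire projectives, thanks to (G3)--(G4), and only the two ends of the string survive stably); and (ii) $\Omega C_\delta\cong C_{\delta'}$ where $\delta'$ is the unique arrow with $\delta\delta'\in I$, and $\Omega C_\delta=0$ if no such arrow exists. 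Given (i) and (ii), the $\Omega$-orbit of any module lands after one step among the $C_\delta$ and then simply walks along the chain of consecutive relations starting at $\delta$; since $Q$ is finite, this chain either terminates (finite projective dimension, excluded for a non-projective Cohen--Macaulay module) or enters a saturated cycle, where $C_{\alpha_{i-1}}=M(u_i)$. Your sketch contains neither (i) nor (ii); the ``strictly decreasing complexity'' is a placeholder for exactly the computation that constitutes the proof, and as formulated it cannot be repaired into a monotone invariant. So the skeleton matches Kalck's proof, but the decisive step is missing and the proposed way of supplying it is the wrong mechanism.
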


\subsection{2-CY tilted algebras and Jacobian algebras} A triangulated $k$-category $\Cc$, $\Hom$-finite with split idempotents, is  \emph{$d$-Calabi-Yau} ($d$-CY for short) if there is a bifunctorial isomorphism
\begin{equation*}
\Hom_\Cc (X,Y)\simeq D\Hom_\Cc(Y,X[d]), \ for \ all \ X,Y\in   \Cc.
\end{equation*}
Let $\Cc$ be a $2$-CY category, an object $T$ is \emph{cluster-tilting} if it is basic and 
\[\add T = \{ X\in \Cc \colon  \Hom_\Cc(X,T[1])=0\}. \] 
\begin{definicion} The endomorphism algebra of a cluster-tilting object, $\End_\Cc(T)$, is called a $2$-CY tilted algebra. 
\end{definicion}
Examples of $2$-CY tilted algebras are the \emph{cluster-tilted} algebras defined in \cite{BMR}. We will need a result due to Keller and Reiten.

\begin{proposicion}\label{prop KR} \cite{KR} Let $\Lambda$ be a $2$-CY tilted algebra, $\Lambda$ is Gorenstein of dimension less than or equal to one. 
\end{proposicion}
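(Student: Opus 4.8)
The plan is to run the Keller--Reiten argument directly from the definition of a cluster-tilting object, reading the Gorenstein bound off a single long exact sequence. Fix the ambient $2$-CY category $\Cc$, the cluster-tilting object $T$, and set $\Lambda=\End_\Cc(T)$; consider the cohomological functor $F=\Hom_\Cc(T,-)\colon\Cc\to\mod\Lambda$. Two features of $F$ will be used repeatedly: it sends each triangle of $\Cc$ to a long exact sequence of $\Lambda$-modules, and it carries $\add T$ into $\proj\Lambda$, with $F(T)=\Lambda$. The essential structural input is that, since $T$ is cluster-tilting, every object $X\in\Cc$ sits in an approximation triangle $T_1\to T_0\to X\to T_1[1]$ with $T_0,T_1\in\add T$. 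This is the standard consequence of the functorial finiteness of $\add T$ together with the vanishing defining $\add T$, and I would cite it rather than reprove it.

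First I would apply $F$ to the approximation triangle of the object $X=T[2]$. Feeding $T_1\to T_0\to T[2]\to T_1[1]$ into $F$ produces a long exact sequence whose two boundary terms of the relevant segment are $\Hom_\Cc(T,T[1])$, equal to $\Hom_\Cc(T,(T[2])[-1])$, and $\Hom_\Cc(T,T_1[1])$. Both vanish by the cluster-tilting condition: applying the $2$-CY duality to the defining equality $\add T=\{Y:\Hom_\Cc(Y,T[1])=0\}$ rewrites it as $\add T=\{Y:\Hom_\Cc(T,Y[1])=0\}$, and since $T,T_1\in\add T$ the two terms are zero. The long exact sequence therefore collapses to a short exact sequence $0\to FT_1\to FT_0\to F(T[2])\to 0$ with $FT_1,FT_0$ projective, so $\projdim_\Lambda F(T[2])\leq 1$. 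The $2$-CY isomorphism then identifies the cokernel, $F(T[2])=\Hom_\Cc(T,T[2])\simeq D\Hom_\Cc(T,T)=D\Lambda$, whence $\projdim_\Lambda D\Lambda\leq 1$.

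To finish I would translate this bound into the Gorenstein statement. The inequality $\projdim D(\Lambda^{op})\leq 1$ is exactly one half of the definition of Gorenstein dimension $\leq 1$, and the symmetric half, $\injdim{}_\Lambda\Lambda\leq 1$, follows by running the identical computation in $\Cc^{op}$: this category is again $2$-CY with the same cluster-tilting object $T$, and its endomorphism algebra is $\Lambda^{op}$, so the argument above yields the dual projective-dimension bound as well. Combining the two gives $\injdim\Lambda=\projdim D(\Lambda^{op})\leq 1$, i.e. $\Lambda$ is Gorenstein of dimension at most one. The step requiring the most care is the bookkeeping of left versus right modules and the matching of the two one-sided bounds; the existence of the $\add T$-approximation triangle is the single external fact I would lean on, and everything else reduces to reading off the one long exact sequence and invoking the $2$-CY duality.
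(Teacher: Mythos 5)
Your proposal is correct, and since the paper gives no proof of this proposition but simply cites \cite{KR}, the right comparison is with Keller--Reiten's original argument, which is exactly what you have reconstructed: the $\add T$-approximation triangle of $T[2]$ pushed through $F=\Hom_\Cc(T,-)$, with the two boundary terms killed by the cluster-tilting condition and $2$-CY duality, yields $\projdim_\Lambda F(T[2])=\projdim_\Lambda D\Lambda\leq 1$, and passing to $\Cc^{op}$ gives the other one-sided bound. So the proof is sound and takes essentially the same route as the cited source.
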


We will also need the next result.

\begin{teorema}\label{teorema ralf} \cite{GS} Let $\Lambda$ be a 2-CY tilted algebra. Then, $M \in \uCMP (\Lambda)$ if and only if $\Omega^2 \tau M \simeq M$.
\end{teorema}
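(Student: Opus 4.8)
The plan is to split the equivalence into its two directions and to exploit the two structural inputs already at hand: $\Lambda$ is Gorenstein of dimension $d\le 1$ (Proposition \ref{prop KR}), and, by Remark \ref{rema gorenstein}, a module is Cohen--Macaulay exactly when it is a $d$-th syzygy, so that over such an algebra every first syzygy already lies in $\CMP(\Lambda)$. Throughout I read the relation $\Omega^2\tau M\simeq M$ in the stable category $\uCMP(\Lambda)$ (equivalently, up to projective summands), and I may assume $M$ is indecomposable and non-projective.

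First I would dispatch the easy implication ($\Leftarrow$). Suppose $\Omega^2\tau M\simeq M$. The module $\Omega^2\tau M=\Omega(\Omega\tau M)$ is a first syzygy, hence it belongs to $\CMP(\Lambda)$ by Remark \ref{rema gorenstein} (this is the case $d=1$; when $d=0$ the algebra is self-injective and every module is Cohen--Macaulay, so there is nothing to prove). Since projective modules are Cohen--Macaulay and $\CMP(\Lambda)$ is closed under direct summands, a stable isomorphism $M\oplus P\cong \Omega^2\tau M\oplus Q$ with $P,Q$ projective forces $M\in\CMP(\Lambda)$, that is $M\in\uCMP(\Lambda)$.

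The substance of the theorem is the forward implication ($\Rightarrow$), and here I would invoke the \emph{stably Calabi--Yau} property of 2-CY tilted algebras from \cite{KR}: the stable category $\uCMP(\Lambda)$ is a $3$-Calabi--Yau triangulated category. The plan is to translate this into syzygy language. Because $\Lambda$ is Gorenstein, $\CMP(\Lambda)$ is a Frobenius exact category whose projective-injective objects are exactly the projective $\Lambda$-modules, so the suspension of the triangulated category $\uCMP(\Lambda)$ is the cosyzygy $\Omega^{-1}$; thus $[1]=\Omega^{-1}$ and $[3]=\Omega^{-3}$. The $3$-Calabi--Yau condition says the Serre functor is $S\simeq[3]$, so by Reiten--Van den Bergh the Auslander--Reiten translate of the triangulated category $\uCMP(\Lambda)$ is $S\circ[-1]\simeq\Omega^{-3}\circ\Omega=\Omega^{-2}$.

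Finally I would identify this triangulated translate with the module-theoretic $\tau=D\,\mathrm{Tr}$. Over a Gorenstein algebra the subcategory $\CMP(\Lambda)$ is functorially finite with Auslander--Reiten sequences inherited from $\mod\Lambda$, and these become the Auslander--Reiten triangles of $\uCMP(\Lambda)$; consequently the abstract triangulated translate restricts to $\tau$ on Cohen--Macaulay modules. Combining the two previous steps yields, for every $M\in\uCMP(\Lambda)$, the isomorphism $\tau M\simeq\Omega^{-2}M$, i.e. $\Omega^2\tau M\simeq M$. The main obstacle is exactly this last bookkeeping: one must check that the abstract suspension, Serre functor and translate of the triangulated category $\uCMP(\Lambda)$ coincide, on objects and naturally, with $\Omega^{-1}$, $\Omega^{-3}$ and $\tau$, so that the clean identity $\tau\simeq\Omega^{-2}$ valid in any $3$-Calabi--Yau category becomes the stated formula. (Alternatively, to avoid citing the $3$-Calabi--Yau property one could realize $\Lambda=\End_\Cc(T)$ for a cluster-tilting object $T$ in a $2$-CY category $\Cc$, compute minimal projective presentations of the modules $\Hom_\Cc(T,X)$ from $\add T$-approximation triangles, and use $\tau_\Cc=[1]$ to track $\Omega$ and $\tau$ directly; the Cohen--Macaulay condition then surfaces as the vanishing of the correction terms supported on $\add T[1]$.)
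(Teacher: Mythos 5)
Your argument for the direction ($\Leftarrow$) is correct, and it is the same syzygy argument this paper itself uses for the converse part of Theorem \ref{main} (there for general $m$): a module stably isomorphic to $\Omega^2\tau N$ is a syzygy, hence Cohen--Macaulay by Remark \ref{rema gorenstein} since the Gorenstein dimension is at most one. Note also that the paper does not prove this theorem at all --- it is imported from \cite{GS} as background --- so the real comparison is with that source, whose proof does begin, as yours does, from the Keller--Reiten theorem \cite{KR} that $\uCMP(\Lambda)$ is stably $3$-Calabi--Yau.

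The forward direction, however, contains a genuine gap, located exactly at the step you dismiss as bookkeeping. You assert that the Auslander--Reiten sequences of $\CMP(\Lambda)$ are inherited from $\mod\Lambda$, so that the translate of the triangulated category $\uCMP(\Lambda)$ (which is $[2]=\Omega^{-2}$ by the $3$-CY property) coincides with the module-theoretic $\tau$, giving $\tau M\simeq\Omega^{-2}M$. This is false: for $M\in\CMP(\Lambda)$ the module $\tau M=D\,\mathrm{Tr}\,M$ is in general \emph{not} Gorenstein projective, so the AR sequence $0\to\tau M\to E\to M\to 0$ of $\mod\Lambda$ need not lie in $\CMP(\Lambda)$, and the internal translate of $\uCMP(\Lambda)$ differs from $\tau$. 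Concretely, take the cluster-tilted algebra of type $\Aa_4$ with quiver the $3$-cycle $1\xra{\alpha}2\xra{\beta}3\xra{\gamma}1$, one extra arrow $\delta\colon 2\to 4$, and $I=\langle\alpha\beta,\beta\gamma,\gamma\alpha\rangle$. Here $S_1=\rad P(3)$ is stably Cohen--Macaulay by Theorem \ref{teorema de kalck}, and a direct computation with the minimal presentation $P(2)\to P(1)\to S_1\to 0$ gives $\tau S_1\simeq S_2$, which is not Cohen--Macaulay ($S_2$ embeds in no projective), while $\Omega^{-2}S_1\simeq\rad P(1)=\begin{smallmatrix}2\\4\end{smallmatrix}$; these are not even stably isomorphic, and only the composite $\Omega^2\tau S_1\simeq S_1$ holds. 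Your final ``i.e.'' silently cancels $\Omega^{2}\Omega^{-2}$, which is legitimate only on $\uCMP(\Lambda)$ --- precisely where $\tau M$ fails to live. Bridging this discrepancy between $\tau$ (which leaves the Gorenstein projective side) and the internal translate of $\uCMP(\Lambda)$ is the actual substance of \cite{GS}, where $\tau$ and $\Omega^2$ are compared as functors passing through the Gorenstein injective modules rather than identified outright with shifts. The same phenomenon is visible in this paper's own computations: in Lemma \ref{lema loops} and Theorem \ref{main} one finds $\tau M(u_i)=M(v_{i+1})$, which is typically not Cohen--Macaulay, and all the work goes into showing that the extra syzygy summand $M(w'_{i+1})$ is annihilated by the appropriate power of $\Omega$.
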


\emph{Quivers with potential} were introduced in \cite{DWZ}. A potential $W$ is a (possibly infinite) linear combination of cycles in
$Q$, up to cyclic equivalence. Given an arrow $\alpha$ and a cycle $\alpha_1 \ldots \alpha_l$, the cyclic derivative $\partial_\alpha$ is defined by \[\partial_\alpha (\alpha_1 \ldots \alpha_l)= \sum_{k+1}^{l} \delta_{\alpha \alpha_k} \alpha_{k+1} \ldots \alpha_l \alpha_1 \ldots \alpha_{k-1}, \]
where $\delta_{\alpha \alpha_k}$ is the Kronecker delta, and $\partial_\alpha$ extends by linearity. Notice that the cycle $\alpha_1 \ldots \alpha_l$ may have repetitions. Let $R \langle\langle Q\rangle\rangle$ be the complete path algebra consisting of all (possibly
infinite) linear combinations of paths in $Q$. Let $(Q,W)$ be a quiver with potential, the \emph{Jacobian algebra} is defined to be $Jac(Q,W)= R\langle\langle Q\rangle\rangle/ \langle\partial_\alpha W, \alpha \in Q_1\rangle$. 

\smallskip

Amiot \cite[Sec. 3]{Am} showed that Jacobian algebras are $2$-CY tilted constructing a 2-CY category $\Cc_{(Q,W)}$, the result just asks $Jac(Q,W)$ to be Jacobi-finite, this means that $Jac(Q,W)$ is finite-dimensional as a $k$-algebra. In \cite{Am11}, Amiot asked whether all $2$-CY tilted algebras are Jacobian algebras. In section \ref{seccion 2} we study gentle algebras and prove that the answer is affirmative when $\chara k \neq 3$.

\subsection{$m$-cluster categories and $m$-cluster tilted algebras}

The \emph{cluster category} $\Cc_Q$ associated to $Q$ was introduced in \cite{BMRRT} as the quotient category $\Db( \mod kQ)$ over the functor $F = \tau^{-1} [1]$.
The \emph{$m$-cluster category} associated to $Q$, that we denote by $\Cc^m_Q$, was defined in \cite{T} as the quotient category $\Db( \mod kQ)$ over the functor $F_m = \tau^{-1} [m]$. 
The category $\Cc^m_Q$ is triangulated. 

A basic object $T$ in $\Cc^m_Q$ is called \emph{$m$-cluster tilting} if
\begin{enumerate}
\item[$\ast$] $\Ext^{i}_{\Cc^m_Q} (T,T')=0$ for all $T,T' \in \add T$, for $i=1, \ldots, m$
\item[$\ast$] if $X \in \Cc^m_Q$ is such that $\Ext^{i}_{\Cc^m_Q} (X,T)=0$ for all $T \in \add T$, and for $i=1, \ldots, m$, then $X \in \add T$.
\end{enumerate}

\begin{remark}\label{rema10} We follow the notation in \cite{T}. It is important to recall that $\Cc^m_Q$ is an example of an $(m+1)$-CY category and the category $\add T$, were $T$ is an $m$-cluster tilting object, is an example of an $(m+1)$-cluster tilting subcategory in the sense of \cite{KR,KR2,Be15}.
\end{remark}

Any $m$-cluster-tilting object has $\vert Q_0 \vert$ summands, \cite[Theorem 2]{T}. The endomorphism algebra $\Lambda= \End_{\Cc^m_Q} (T)$ is called an $m$-\emph{cluster tilted algebra}, and it is a case of $(m+1)$-Calabi-Yau tilted algebra. If $Q$ is such that its underlying graph $\Delta_Q$ is a Dynkin or euclidean graph, we say that $\Lambda= \End_{\Cc^m_Q} (T)$ is a $m$-cluster tilted algebra of type $\Delta_Q$. When $\Delta_Q$ is of type $\Aa$ or $\tAa$, the $m$-cluster categories and their corresponding $m$-cluster tilting objects were realized geometrically and studied in \cite{Ba,Mu}, and \cite{Tol,Gub}, respectively. These geometric realizations generalize those from \cite{CCS,ABCP,BZ} in the case of the unpunctured disc and the annulus. 

\vspace{6pt}

\textit{\bf The case $\Aa$}: Let $\Pi$ be a disk with $nm+2$ marked points (or equivalently a $nm+2$-gon). A \emph{$m$-diagonal} is a diagonal dividing $\Pi$ into an $(mj+2)$-gon and an $(m(n-j) + 2)$-gon for some $ 1 \leq j \leq n-1/2 $. A $(m+2)$-angulation is a collection of non-intersecting $m$-diagonals that form a partition of $\Pi$ into $(m+2)$-gons. There are bijections:

\begin{center}
\begin{tabular}{ccc}
$m$-diagonals in $\Pi$ &$\leftrightarrow$ &indecomposable objects in $\Cc^m_{\Aa}$
\\
$(m+2)$-angulations of $\Pi$ &$\leftrightarrow$& $m$-cluster tilting objects in $\Cc^m_{\Aa}$

\end{tabular}
\end{center}

\textit{ \bf The case $\tAa$}: While in \cite{Tol,Gub} the authors use an annulus with marked points, we will use the universal cover given by the strip $\Sigma$, having copies of the $mp$, and $mq$, marked points on the boundary components that we denote $\mathcal{B}_p$, and $\mathcal{B}_q$, respectivelly. The points having the same label in a fixed boundary are considered up to equivalence given by congruence modulo $mp$ and $mq$. There are (isotopy classes of) arcs on $\Sigma$, called \emph{$m$-diagonals}. Each $m$-diagonal belongs to a family:
\begin{enumerate}
\item[$\ast$] Transjective: an arc $\alpha$ having an endpoint $x$ in $\Bp$ and the other endpoint $y$ in $\Bq$. The labels in $x$ and $y$ are congruent modulo $m$. (See Figure \ref{ejemplo2} - right)
\item[$\ast$] Regular on a $p$-tube: an arc $\alpha$ having both endpoints over $\mathcal{B}_p$, starting at $u$ going in positive direction counting $u+km+1$ steps, with $k\geq 1$.
\item[$\ast$] Regular on a $q$-tube: analogous to the previous case. 
\end{enumerate}

As in the previous case, there are bijections

\begin{center}
\begin{tabular}{ccc}
$m$-diagonals in $\Sigma$ &$\leftrightarrow$ &indecomposable rigid objects in $\Cc^m_{\tAa}$
\\
$(m+2)$-angulations of $\Sigma$ &$\leftrightarrow$& $m$-cluster tilting objects in $\Cc^m_{\tAa}$


\end{tabular}
\end{center}

Given an $(m+2)$-angulation $\Tt$ of $\Pi$ or $\Sigma$, the bound quiver $(Q_\Tt,I_\Tt)$ of the $m$-cluster tilted algebra $\Lambda_\Tt$ defined by the associated $m$-cluster tilting object is obtained form the geometric configuration, see \cite{Gub,Mu}. We recall this bound quiver construction in the following example. 

\begin{figure}[h!]
\centering
\def\svgwidth{5.3in}
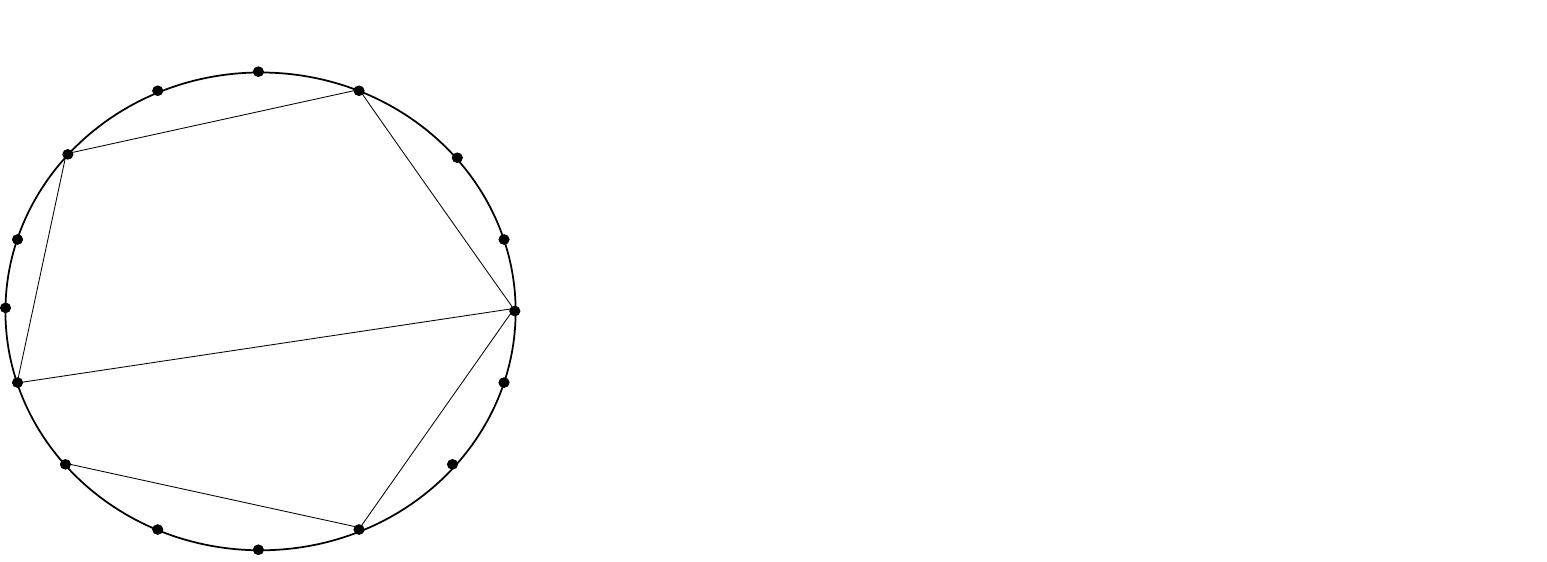
\caption{\sf $4$-angulation of $\Pi$ (left) and $5$-angulation of $\Sigma$ (right), and bound quivers defined by them. The labels at the endpoints of transjective arcs are congruent modulo $3$. The arrows defining $I_\Tt$ are connected by a dotted arc.}
\label{ejemplo2}
\end{figure}

\begin{ejemplo}\label{ejemplo1}  In Figure \ref{ejemplo2}, we show the bound quiver defined by a $4$-angulation of $\Pi$ (left) that corresponds to a $2$-cluster tilting object in $\Cc^m_{\Aa_6}$, and a $5$-angulation of $\Sigma$ (right) that corresponds to a $3$-cluster tilting object in $\Cc^m_{\tAa}$ where $p=3$ and $q=2$. In both cases, 
the vertices in $Q_\Tt$ are in one-to-one correspondence with the elements in $\Tt$. For any two vertices $i,j \in Q_\Tt$, there is an arrow $i \raw j$ when the corresponding $m$-diagonals $x_i$ and $x_j$ share a vertex, they are edges of the same $(m+2)$-gon and $x_i$ follows $x_j$ clockwise. Given consecutive arrows $i \xrightarrow{\alpha} j \xrightarrow{\beta} k$, then $\alpha \beta \in I_\Tt$ if and only if $x_i$, $x_j$ and $x_k$ are edges in the same $(m+2)$-gon. 

\end{ejemplo}

\section{Gentle $2$-CY tilted algebras}\label{seccion 2}

During this section, we work with gentle algebras $\Lambda=kQ/I$.  First we gather information about the zero-relations and saturated cycles in $(Q,I)$ in order to find all the possible configurations for a $2$-CY tilted algebra. After that, we will consider blocks, as it was done in \cite{FST}, to describe all the possible bound quivers that can define a $2$-CY tilted algebra.

\begin{lema}\label{lema loops} Let $\Lambda=kQ/I$ be a gentle algebra of Gorenstein dimension at most one, and such that $\Omega^2 \tau M \simeq M$ for all $M$ in $\uCMP(\Lambda)$. Then,
\begin{enumerate}
\item each zero-relation lies in a saturated cycle.
\item all saturated cycles have length three or are saturated loops.
\end{enumerate}

\end{lema}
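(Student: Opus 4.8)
The plan is to handle the two assertions separately: the Gorenstein hypothesis alone will yield (1), while the periodicity hypothesis $\Omega^2\tau M\simeq M$ combined with Kalck's description (Theorem \ref{teorema de kalck}) will yield (2).

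For (1), I would first turn the Gorenstein hypothesis into a statement about relations. Since $\Lambda$ has Gorenstein dimension at most one, Theorem \ref{lema GR} gives $n(\Lambda)\le 1$, so there is no critical path of length $\ge 2$; equivalently, no zero-relation $\alpha\beta\in I$ can have $\alpha$ gentle. By the definition of a gentle arrow this means: whenever $\alpha\beta\in I$ there is an arrow $\gamma$ with $\gamma\alpha\in I$. Now let $R$ be the set of arrows occurring as the first arrow of some zero-relation. By (G3) every $\alpha\in R$ has a unique successor $\beta$ with $\alpha\beta\in I$ and, by the previous sentence, a unique predecessor $\gamma=p(\alpha)$ with $\gamma\alpha\in I$, and clearly $\gamma\in R$. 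Thus $p\colon R\to R$ is well defined, and it is injective, since two relations $\gamma\alpha,\gamma\alpha'\in I$ would violate (G3); as $R$ is finite, $p$ is a permutation. The $p$-orbit of a given $\alpha$, read backwards, is then a sequence of arrows whose consecutive products lie in $I$ and which closes up into a cycle of $Q$; this is exactly a saturated cycle containing $\alpha\beta$ (a saturated loop when the orbit has length one). This proves (1) and shows in addition that every zero-relation occurs as a consecutive pair inside such a cycle.

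For (2), fix a saturated cycle $x_1\xrightarrow{\alpha_1}\cdots\xrightarrow{\alpha_n}x_1$ of length $n$. By Theorem \ref{teorema de kalck} its indecomposable Cohen--Macaulay modules are the string modules $M(u_i)$, with $u_i$ the maximal path starting at $x_i$ as in Figure \ref{kalck1}, and these exhaust $\ind\uCMP(\Lambda)$. I would compute the two operators in the hypothesis as shifts around the cycle. From the projective cover $P(x_i)=M(u_i^{-1}\alpha_i u_{i+1})$, whose top is $S_{x_i}$, the kernel of $P(x_i)\twoheadrightarrow M(u_i)$ is the submodule supported on the descending branch $\alpha_i u_{i+1}$, which is exactly $M(u_{i+1})$; hence $\Omega M(u_i)\simeq M(u_{i+1})$. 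Computing the Auslander--Reiten translate by the string combinatorics of Butler--Ringel (equivalently, from the minimal projective presentation $P(x_{i+1})\to P(x_i)\to M(u_i)\to 0$) yields likewise $\tau M(u_i)\simeq M(u_{i+1})$ in $\uCMP(\Lambda)$. Therefore $\Omega^2\tau M(u_i)\simeq M(u_{i+3})$, and the hypothesis $\Omega^2\tau M(u_i)\simeq M(u_i)$ forces $M(u_{i+3})\simeq M(u_i)$ in $\uCMP(\Lambda)$ for every $i$. Since no $M(u_i)$ is projective (the string $u_i$ is a directed path, whereas every indecomposable projective string has a peak) and the $M(u_i)$ are pairwise non-isomorphic for a reduced cycle, this gives $i+3\equiv i\pmod n$, i.e. $n\mid 3$. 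Thus $n=3$ or $n=1$, the latter being a saturated loop, which is the claim.

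I expect the main obstacle to be the Auslander--Reiten computation $\tau M(u_i)\simeq M(u_{i+1})$ in full generality, that is, when the tails $u_i$ and $v_i$ of Figure \ref{kalck1} are nontrivial and when $x_i$ meets additional arrows; here the gentle conditions (G1)--(G4) must be invoked to determine precisely which compositions at $x_i$ avoid $I$, so that the string operations defining $\tau$ reduce to the behaviour seen in the radical-square-zero model cycle. A secondary issue to treat carefully is the degenerate and non-primitive cases --- trivial tails, saturated loops, and cycles that merely repeat a shorter pattern --- where one must verify that the $M(u_i)$ still realize $n$ distinct objects of $\uCMP(\Lambda)$, so that $M(u_{i+3})\simeq M(u_i)$ genuinely forces $n\mid 3$.
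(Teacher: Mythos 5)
Your argument for part (1) is correct and is essentially an equivalent repackaging of the paper's: both hinge on the observation that Gorenstein dimension at most one forces $n(\Lambda)\le 1$, so the first arrow of a zero-relation is never gentle. Your injective predecessor map $p$ on the finite set $R$ of relation-starting arrows closes each chain of relations into a saturated cycle, whereas the paper extends a chain that does not close up backwards into a critical path of length at least two and derives a contradiction; either route is fine.

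Part (2) contains a genuine error: the identity $\tau M(u_i)\simeq M(u_{i+1})$ is false. Applying the Nakayama functor to the minimal presentation $P(x_{i+1})\to P(x_i)\to M(u_i)\to 0$ gives $\tau M(u_i)\simeq M(v_{i+1})$, the string module on the maximal path \emph{ending} at $x_{i+1}$ other than through $\alpha_i$ (notation of Figure \ref{kalck1}); this module has socle $S(x_{i+1})$, while $M(u_{i+1})$ has top $S(x_{i+1})$, and they differ as soon as $x_{i+1}$ admits a second incoming arrow. Concretely, for the cluster-tilted algebra with quiver $1\xra{\alpha}2\xra{\beta}3\xra{\gamma}1$, $\alpha\beta=\beta\gamma=\gamma\alpha=0$, plus a pendant arrow $\delta\colon 4\to 2$ with $\delta\beta\notin I$, all hypotheses of the lemma hold, $M(u_1)=S(1)$ and $M(u_2)=S(2)$, yet $\tau S(1)=M(\delta)$ is two-dimensional and non-projective, hence not isomorphic to $S(2)$ even up to projective summands. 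The consequence of the correct computation is that $\Omega^2\tau M(u_i)$ is not $M(u_{i+3})$ but $\Omega M(w'_{i+1})\oplus M(u_{i+3})$, where $M(w'_{i+1})$ is the second direct summand of $\Omega M(v_{i+1})$ coming from the other branch of $\rad P(y_{i+1})$; one must then use the hypothesis together with the indecomposability of $M(u_i)$ and the fact that $M(u_{i+3})\ne 0$ to conclude, via Krull--Schmidt, that $\Omega M(w'_{i+1})=0$ and $M(u_{i+3})\simeq M(u_i)$. Your proof never sees this extra summand, and this is exactly the part of the computation that carries the content (in the $m$-cluster-tilted analogue of Section 3 it is the hard step). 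Finally, your passage from $M(u_{i+3})\simeq M(u_i)$ to $n\mid 3$ assumes the $M(u_i)$ are pairwise non-isomorphic; what one actually obtains is $x_{i+3}=x_i$ by comparing tops, after which the degenerate configurations (a saturated $2$-cycle, two loops at a vertex, a loop traversed three times) must be excluded by hand using gentleness and finite-dimensionality --- you flag this issue but do not carry it out, while the paper does.
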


\begin{proof} 
(1) Let $uv$ be a zero-relation that is not part of a saturated cycle. If $u$ is a gentle arrow, then $uv$ is a critical path of length two, so $n(\Lambda)\geq 2$ and $\Lambda$ is Gorenstein of dimension at least two. Absurd, by Proposition \ref{prop KR}. If the arrow $u$ is not gentle, the there is an arrow $u_1$ such that $u_1u$ and $uv$ are zero-relations. Since $uv$ is not part of a saturated cycle there is a maximal critical path $u_t \ldots u_1$ such that $u_t \ldots u_1 uv$ is a critical path, because $Q$ is finite and none of these arrows can be in a saturated cycle. Then we have $n(\Lambda)\geq 2$, again it is not possible by Proposition \ref{prop KR}. Thus $uv$ must lay in a saturated cycle.
\\
(2) Let $x_1 \xra{\alpha_1} \cdots \xra{\alpha_{n-1}} x_{n}\xra{\alpha_n} x_1$ be a saturated cycle. The non-zero paths starting and ending at the vertices $x_i$ determine the indecomposable projective and injective modules $P(x_i)=M(u_i^{-1} \alpha_i u_{i+1})$ and $I(x_i)= M(v_{i-1} \alpha_{i-1} v_{i}^{-1})$. See Figure \ref{kalck1}. By Theorem \ref{teorema de kalck}, $M(u_i)$ is in $\uCMP$. By Theorem \ref{teorema ralf}, we have an isomorphism $\Omega^2 \tau M(u_i) =M(u_i)$. We start computing $\tau M(u_i)$, first we need a minimal projective presentation. 
\[\xymatrix@C8pt@R8pt{&M( u_{i+1}^{-1} \alpha_{i+1} u_{i+2}) \ar[rr]^*-<1pt>{f}\ar@{->>}[rd] && M( u_i^{-1} \alpha_i u_{i+1})
\ar[rr]&& M(u_i) \ar[rr]&&0.\\ M(u_{i+2}) \ar@{^{(}->}[ru]&&M(u_{i+1}) \ar@{^{(}->}[ru]}\]
We apply the Nakayama functor $\nu= D \Hom_\Lambda (-,\Lambda)$,
\begin{displaymath}
	\xymatrix  @R=0.6cm  @C=0.6cm {
		0\ar[r] & M(v_{i+1})\ar@{^(->}[r] & M(v_i \alpha_i  v_{i+1}^{-1} )\ar[rr]^*-<1pt>{\nu f}\ar@{^(->}[rd] && M(  v_{i-1}\alpha_{i-1} v_i^{-1} )\ar@{>>}[r]&M(v_{i-1})\\
		&&& M(v_i)\ar@{>>}[ru] &&
	}
	\end{displaymath}
the kernel of $\nu f$ is $\tau M(u_i)= M(v_{i+1})$. 
\begin{figure}[h!]


\begin{center}
\begin{tikzcd}[column sep=small]
& & y_{i+1}  \arrow[d,rightsquigarrow,"v_{i+1}"]\arrow[rr,rightsquigarrow,"w_{i+1}"] & & \ \\
x_i \arrow[rr,"\alpha_i"] & & x_{i+1} \arrow[rr,"\alpha_{i+1}"] & & x_{i+2}\arrow[d,rightsquigarrow,"u_{i+2}"] \\
& & & & \
\end{tikzcd}
\end{center}
\caption{\sf Local information needed to compute the projective cover of $M(v_{i+1})$}
\end{figure}

Now we compute $\Omega M(v_{i+1})$. Let $S(y_{i+1})=\ttop M(v_{i+1})$. The projective cover of $M(v_{i+1})$ is $P(y_{i+1})=M(w_{i+1}^{-1}v_{i+1} \alpha_{i+2} u_{i+2})$. Then, $\Omega M(v_{i+1})= M(w'_{i+1}) \oplus M(u_{i+2})$, where $M(w'_{i+1})$ is the maximal sumbmodule of the uniserial $M(w_{i+1})$ and is at the same time a direct summand of $\rad P(y_{i+1})$. Therefore $\Omega^2 \tau M(u_i)= \Omega(M(w'_{i+1}) \oplus M(u_{i+2}) )= \Omega M(w'_{i+1}) \oplus \Omega M(u_{i+2})$. It is easy to see that $\Omega M(u_{i+2})=M(u_{i+3})\neq 0$. Therefore, by hypothesis, it must be $\Omega M(w'_{i+1})=0$ and $M(u_{i+3})=M(u_i)$, thus $\ttop M(u_{i+3})=S(x_{i+3})=\ttop M(u_i)=S(x_i)$, for all $i\in \{1,\ldots,n\}$. The only possible long saturated cycles satisfying this condition are of length three $x_1 \xra{\alpha_1} x_2 \xra{\alpha_2} x_3 \xra{\alpha_3} x_1$ where $x_1,x_2,x_3$ are different vertices. Indeed, if there were only two different vertices,  let 
\begin{tikzcd}
\arrow[loop left,"\alpha_1"]{l} \bullet 
\end{tikzcd}
be a saturated cycle such that $\alpha_i \alpha_{i+1}\in I$ for $i$ index modulo 3, then by the gentleness $\alpha_1^2 \notin I$ so this will define an infinite dimensional algebra. If the saturated cycle is $ \bullet\overset{\alpha_2}{\underset{\alpha_1}\rightleftarrows} \bullet $, then the condition $\ttop M(u_i)=\ttop M(u_{i+3})$ does not hold. The last possibility is considering loops. Notice that two different loops attached to a vertex 
\begin{tikzcd}
\arrow[loop left,"\delta_1"]{l} \bullet \arrow[loop right,"\delta_2"]{r}
\end{tikzcd} 
would define an infinite dimensional algebra. The last option is using a single loop 
\begin{tikzcd}
\arrow[loop left,"\delta"]{l} \bullet 
\end{tikzcd} 
such that $\delta^2 \in I$, besides the 3-cycle with three vertices, this is the only configuration allowing both conditions: $M(u_{i+3})=M(u_i)$ and $\Lambda$ is finite dimensional.\end{proof}

The algebras arising from surface triangulations $\Lambda_\Tt =k Q_\Tt/I_\Tt$ ($m=1$) were defined in \cite{ABCP}. Following \cite[Sec. 13]{FST}, the quiver $Q_\Tt$ can be constructed matching directed graphs, or \emph{blocks}, of type I (a single arrow), and type II (3-cycle).

\begin{center}
\begin{tikzcd}[column sep=small]
& \bullet  \arrow[rr,"\lambda"] & & \bullet 
\end{tikzcd}\hspace{30pt}\begin{tikzcd}[column sep=small]
& \bullet  \arrow[dr,"\alpha"] & \\
\bullet \arrow[ur,"\gamma"] & & \bullet \arrow[ll,"\beta"] 
\end{tikzcd}
\end{center}
In view of Lemma \ref{lema loops}, in the following subsection we will add a new block \begin{tikzcd}
\arrow[loop left,"\delta"]{l} \bullet 
\end{tikzcd}. We call this block \emph{type loop}.

\subsection{Block decomposition} Consider the blocks type I, II and loop mentioned above. The blocks contain also the information 
\begin{enumerate}
\item[(R1)] For a block of type II, $\alpha \beta = \beta \gamma= \gamma \alpha= 0$.
\item[(R2)] For a block of type loop, $\delta^2=0$. 
\end{enumerate}
All the vertices in the blocks are \emph{outlet vertices}. A bound quiver $(Q,I)$ is \emph{gentle-block-decomposable} if it can be obtained from a collection of
disjoint blocks by the following procedure. Take a partial matching of the combined
set of outlets.
\begin{enumerate}
\item Matching an outlet to itself or to another outlet from the same block
is not allowed.
\item  Matching two outlets corresponding to different blocks type loop is not allowed.
\end{enumerate}  

Identify (or \emph{glue}) the vertices within each pair of the matching.  Afer the gluing , having a pair of arrows
connecting the same pair of vertices but going in opposite directions is not allowed. The next is the main result of this section.

\begin{teorema}\label{Teorema} Let $\Lambda=kQ/I$ be a gentle algebra of Gorenstein dimension at most one and such that $\Omega^2 \tau M \simeq M$ for all $M \in \uCMP(\Lambda)$. Then, the bound quiver $(Q,I)$ is gentle-block-decomposable.
\end{teorema}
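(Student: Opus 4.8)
The plan is to read off the block structure directly from the relations, using Lemma~\ref{lema loops} as the starting point. By that lemma every zero-relation lies in a saturated cycle, and every saturated cycle is either a $3$-cycle or a loop. So first I would declare the blocks: each saturated $3$-cycle is a block of type II, each saturated loop is a block of type loop, and every remaining arrow (one lying in no saturated cycle) is a block of type I. To see that this is a genuine partition of $Q_1$, I would use the gentleness condition (G3): for a fixed arrow $\beta$ there is at most one $\alpha$ with $\alpha\beta\in I$ and at most one $\gamma$ with $\beta\gamma\in I$, so an arrow can belong to at most one saturated $3$-cycle; a loop $\delta$ is pinned down by $\delta^2\in I$. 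Hence the assignment of arrows to blocks is well defined, and the internal relations of the blocks are exactly (R1) and (R2).

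Next I would construct the matching by a local analysis at each vertex $v$, the point being to show that $v$ is obtained by gluing \emph{at most two} block-outlets (``corners''), since a partial matching identifies outlets in pairs. For each vertex I would run a case analysis on the number of incoming and outgoing arrows, which (G1) bounds by two each. The combinatorics of (G3) and (G4) pin down the relation pattern: if $v$ has two incoming arrows $a_1,a_2$ and two outgoing arrows $b_1,b_2$, then (G3) and (G4) together force exactly two of the four composites to lie in $I$ and these two to form a perfect matching, say $a_1b_1,a_2b_2\in I$; by Lemma~\ref{lema loops} each of these relations sits in a distinct saturated $3$-cycle (or loop), producing precisely the two corners $(a_1\text{ in},b_1\text{ out})$ and $(a_2\text{ in},b_2\text{ out})$. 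In the lower-valence cases the same two conditions show that an arrow incident to $v$ lying in no relation at $v$ is a free arrow, hence a type-I corner, and that the total corner count never exceeds two. I would then define the matching to glue, for each $v$ carrying two corners, the two corresponding outlets.

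I would then verify the three prohibitions of the block-decomposition procedure and conclude. No outlet is matched within its own block, because saturated $3$-cycles have three distinct vertices and type-I arrows are not loops (again by Lemma~\ref{lema loops}), so every block has distinct endpoints in $Q$. Two loop blocks are never glued, since two loops at one vertex would force an infinite-dimensional algebra, exactly as in the proof of Lemma~\ref{lema loops}. Finally, no pair of arrows between the same two vertices in opposite directions survives, because such a $2$-cycle $\bullet\rightleftarrows\bullet$ violates the $\ttop$ condition established there. Since gluing the declared blocks along this matching reproduces every arrow and (by (R1), (R2)) every relation of $(Q,I)$, the bound quiver is gentle-block-decomposable.

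The main obstacle I expect is the vertex-local counting in the second step, namely guaranteeing that no vertex requires three or more corners. The delicate configuration is a vertex carrying a loop together with further arrows: a priori a loop plus a free incoming and a free outgoing arrow would demand three corners. Here the essential ingredient is (G4): since $a\delta\notin I$ and $\delta b\notin I$ (a relation $a\delta$ or $\delta b$ would force a saturated cycle through $v$ with a repeated vertex, excluded by Lemma~\ref{lema loops}), (G4) applied to the incoming arrow forbids the second non-relation $ab\notin I$, so $ab\in I$ and by the lemma $a,b$ lie in a genuine $3$-cycle. Thus the apparent three corners collapse to loop $+$ type II, and the count stays at two. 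Making this forcing argument airtight across all valence and loop cases is the heart of the proof.
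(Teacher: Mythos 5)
Your proof is correct and takes essentially the same route as the paper: Lemma \ref{lema loops} reduces every zero-relation to a saturated $3$-cycle or a saturated loop, and block-decomposability is then read off from the local structure at each vertex. In fact the paper's own proof is a two-line assertion of exactly this reduction, so your vertex-by-vertex counting via (G1)--(G4) (including the loop-plus-incoming-plus-outgoing configuration, where (G4) forces the extra relation $ab\in I$ and collapses the corner count to two) supplies the verification that the paper leaves implicit.
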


\begin{proof} By Lemma \ref{lema loops}, the only zero-relations allowed are $\delta^2=0$, when $\delta$ is a loop, and those in a saturated 3-cycle. All gentle bound quivers satisfying these conditions can be built matching blocks of type I, II and loop. 
\end{proof}

\begin{remark}\label{remark char 3} The block decomposition for $Q$ may include blocks of type loop, in order to interpret $(Q,I)$ as a Jacobian algebras we also need that $\chara k \neq 3$. The reason is that we use the quiver 
\begin{tikzcd}
\arrow[loop left,"\delta"]{l} \bullet 
\end{tikzcd} 
with potential $W=\delta^3$ and, in this case, the ideal for the Jacobian algebra is generated by $\partial_\delta (\delta^3) = \delta \delta + \delta \delta +\delta \delta = 3\delta^2$.
\end{remark}

\begin{corolario} Let $k$ be algebraically closed and $\chara k \neq 3$. Let $kQ/I$ be a gentle 2-CY tilted algebra, then $kQ/I$ is Jacobian.
\end{corolario}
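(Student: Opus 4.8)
The plan is to reduce the statement to the structural Theorem \ref{Teorema} and then to exhibit an explicit potential, built block by block. First I would verify that a gentle $2$-CY tilted algebra $\Lambda = kQ/I$ satisfies the two hypotheses of that theorem. By Proposition \ref{prop KR} it is Gorenstein of dimension at most one, and by Theorem \ref{teorema ralf} the isomorphism $\Omega^2 \tau M \simeq M$ holds for every $M \in \uCMP(\Lambda)$ (in fact $\uCMP(\Lambda)$ is precisely the class of such modules). Hence Theorem \ref{Teorema} applies and $(Q,I)$ is gentle-block-decomposable, assembled from blocks of type I, II and loop subject to the relations (R1) and (R2). In particular, $I$ is generated exactly by the $3$-cycle relations $\alpha\beta=\beta\gamma=\gamma\alpha=0$ coming from type II blocks together with the relations $\delta^2=0$ coming from loop blocks.

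Next I would assign a potential to this block decomposition: to each type II block with $3$-cycle $\alpha,\beta,\gamma$ I attach the cyclic term $\alpha\beta\gamma$, and to each loop block with loop $\delta$ I attach $\delta^3$, while type I blocks (single arrows) contribute nothing. Setting
\[
W \;=\; \sum_{\text{type II blocks}} \alpha\beta\gamma \;+\; \sum_{\text{loop blocks}} \delta^3 ,
\]
I obtain a potential on $Q$. Because the blocks are disjoint as quivers and the gluing procedure only identifies outlet vertices, every arrow of $Q$ belongs to at most one of these cycles, so the cyclic derivatives $\partial_\alpha W$ can be computed locally inside a single block.

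Finally I would check that $Jac(Q,W)=kQ/I$ by matching generators of the Jacobian ideal $\langle \partial_\alpha W : \alpha \in Q_1\rangle$ against $I$. For an arrow $\alpha$ in a type II block one computes $\partial_\alpha W=\beta\gamma$, and cyclically $\partial_\beta W=\gamma\alpha$, $\partial_\gamma W=\alpha\beta$, which are exactly the relations (R1); for a loop one computes $\partial_\delta W = 3\delta^2$, so that the hypothesis $\chara k\neq 3$ yields $\delta^2=0$, i.e. (R2); and a type I arrow $\lambda$ satisfies $\partial_\lambda W=0$ since it lies in no cycle. Thus the Jacobian ideal coincides with $I$ and $Jac(Q,W)\cong kQ/I$. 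The point demanding care — and the main obstacle — is exactly this last identification in full generality: I must confirm that no arrow lies in two cycles of $W$ (so no spurious terms enter the derivatives), that the gluing introduces no $2$-cycles that would force a reduction of $(Q,W)$ (excluded by the construction, so $W$ has only cubic terms), and that passing to the completion $R\langle\langle Q\rangle\rangle$ is harmless because $\Lambda$ is finite-dimensional with $I$ generated in degree two. As recorded in Remark \ref{remark char 3}, the assumption $\chara k \neq 3$ is used solely to keep $3\delta^2$ nonzero so that the loop relation is recovered.
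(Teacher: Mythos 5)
Your proposal is correct and follows essentially the same route as the paper: reduce to Theorem \ref{Teorema} via Proposition \ref{prop KR} and Theorem \ref{teorema ralf}, then take $W=\sum_i \alpha_i\beta_i\gamma_i+\sum_j \delta_j^3$ and use $\chara k\neq 3$ to recover $\delta^2=0$ from $\partial_\delta W=3\delta^2$. The only difference is that you spell out the verification that the Jacobian ideal equals $I$ (arrows lying in at most one cycle of $W$, no spurious derivative terms), which the paper leaves implicit.
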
  

\begin{proof}
By Proposition \ref{prop KR} and Theorem \ref{teorema ralf}, $kQ/I$ is Gorenstein dimension at most one and $\Omega^2 \tau M \simeq M$ for all $M \in \uCMP(\Lambda)$, so we are under the hypothesis of Theorem \ref{Teorema}. Then $(Q,I)$ is gentle-block-decomposable. We can express $kQ/I$ as a Jacobian algebra $Jac(Q,W)$. The potential is the sum  
\[W=\sum_i \alpha_i \beta_i \gamma_i + \sum_j \delta_j^3, \]
where the index $i$ runs over all the $3$-cycles $\alpha_i \beta_i \gamma_i$ and the index $j$ runs over all the loops. Since $\chara k \neq 3$ the zero-relation $3\delta^2=0$ remains, and is equivalent to $\delta^2=0$.
\end{proof}

\begin{ejemplo}\label{ejemploloop} Let $Q$ be the quiver in Figure \ref{ejemploconloop}, and consider the potential $W=\delta_1^3 + \sum_{i=1}^3 \alpha_i \beta_i \gamma_i$. Then $Jac(Q,W)$ is a gentle 2-CY tilted algebra and $Q$ is a matching of two blocks of type I, three blocks of type II and a block of type loop.

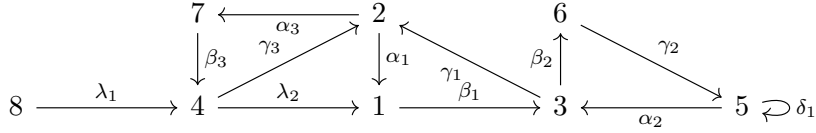
\begin{figure}[h!]


\begin{tikzcd}
&& 7 \arrow[d,"\beta_3"] 
& & 2 \arrow[ll,"\alpha_3"] \arrow[d,"\alpha_1"] && 6 \arrow[drr,"\gamma_2"] \\
8 \arrow[rr,"\lambda_1"] && 4 \arrow[urr,"\gamma_3"]\arrow[rr,"\lambda_2"]
& & 1\arrow[rr,"\beta_1"] & & 3\arrow[ull,"\gamma_1"] \arrow[u,"\beta_2"]& & 5\arrow[ll,"\alpha_2"]\arrow[loop right,"\delta_1"]{r}\end{tikzcd}
\caption{\sf Gentle bound quiver $(Q,I)$, Example \ref{ejemploloop}}
\label{ejemploconloop}
\end{figure}

\end{ejemplo}

\begin{corolario} Let $(Q,I)$ be a bound quiver such that $kQ/I$ is a gentle 2-CY tilted algebra and $Q$ has no loops. Then $kQ/I$ is a Jacobian algebra arising from an unpunctured surface, in the sense of \cite{ABCP}. 
\end{corolario}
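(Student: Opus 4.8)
The plan is to reduce the statement to the block decomposition already established and then to invoke the combinatorial description of unpunctured-surface quivers. First I would note that by Proposition \ref{prop KR} and Theorem \ref{teorema ralf} a gentle 2-CY tilted algebra $kQ/I$ has Gorenstein dimension at most one and satisfies $\Omega^2\tau M \simeq M$ for every $M \in \uCMP(kQ/I)$. Hence the hypotheses of Theorem \ref{Teorema} hold, and $(Q,I)$ is gentle-block-decomposable, i.e.\ built by matching blocks of type I, II and loop, with the relations (R1) inside each $3$-cycle and (R2) for each loop.

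The second step exploits the loop-free hypothesis to discard loop blocks. The only loops that can occur come from blocks of type loop: a type I or type II block has its arrows running between distinct vertices of the same block, and since matching an outlet to another outlet of the same block is forbidden, the gluing never identifies the two endpoints of such an arrow and so creates no new loop; conversely the gluing only ever deletes a pair of opposite arrows between two distinct vertices, never a loop, so every loop block survives into $Q$. Therefore $Q$ has a loop if and only if the decomposition uses a loop block, and the assumption that $Q$ has no loops forces the block decomposition of $(Q,I)$ to use only blocks of type I and type II, subject to the relations (R1).

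The final step is to match this loop-free block decomposition with the geometry. Following \cite[Sec.\ 13]{FST}, the adjacency quivers of triangulations of an unpunctured surface with boundary are exactly the quivers obtained by gluing blocks of type I and II, and under the dictionary of Example \ref{ejemplo1} (the case $m=1$) the relations read off the triangles are precisely the relations (R1). Consequently the pair $(Q,I)$ coincides with $(Q_\Tt,I_\Tt)$ for some triangulation $\Tt$ of an unpunctured surface, which is the characterisation of \cite{ABCP}. Moreover, the previous corollary expresses $kQ/I$ as $Jac(Q,W)$ with $W=\sum_i \alpha_i\beta_i\gamma_i$ once the loop terms $\delta_j^3$ are absent, and this is exactly the potential attached to $\Tt$; hence $kQ/I$ is a Jacobian algebra arising from an unpunctured surface.

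The main obstacle is the compatibility check in the third step: one must verify that the gentle-block-decomposition produced by Theorem \ref{Teorema}, with its matching rules (1)--(2) and the prohibition on a pair of opposite arrows after gluing, agrees on the nose with the restriction of the \cite{FST} block decomposition to blocks of type I and II, and that under this identification the relation set (R1) is carried to $I_\Tt$. This is purely combinatorial, but it is where the geometric content of \cite{ABCP,FST} is actually used.
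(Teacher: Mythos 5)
Your argument is correct and is exactly the route the paper intends: this corollary is stated without an explicit proof, as an immediate consequence of Theorem \ref{Teorema} together with the observation (set up just before that theorem, citing \cite[Sec. 13]{FST} and \cite{ABCP}) that a loop-free gentle-block-decomposition can use only blocks of type I and II and therefore realizes $(Q,I)$ as the bound quiver of a triangulation of an unpunctured surface, with the potential $\sum_i \alpha_i\beta_i\gamma_i$ from the preceding corollary. Your additional verification that loops in $Q$ occur exactly when a loop block is used is a detail the paper leaves implicit, but it does not change the approach.
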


\begin{remark}
\begin{enumerate}
\item Examples of 2-CY tilted algebras that are not Jacobian were constructed in \cite[Proposition 2]{Lad} when the field is of positive characteristic.

\item  The finite representation type 2-CY tilted algebras obtained via the block decomposition defined in this section appear in \cite[Sec. 5]{BeO}. In this work the authors use covering techniques to define mutations at loops and 2-cycles when the 2-CY category $\Cc$ is algebraic (that is, $\Cc$ is a stable category of some Frobenuis category). 
\end{enumerate}

\end{remark}

\section{Gentle $m$-cluster tilted algebras}

In this section we study gentle algebras $k Q_\Tt/I_\Tt$ arising from $(m+2)$-angulations, $m\geq 1$. One can generalize the definition of $(Q_\Tt,I_\Tt)$, given in Example \ref{ejemplo1}, to $(m+2)$-angulations of unpunctured Riemann surfaces. This was done in \cite{DR}, where the A-G invariant for gentle algebras arising from $(m+2)$-angulations was computed. 

\smallskip

The following properties were observed in \cite[Rem. 2.18]{Mu} and \cite[Sec. 7]{Gub} in the case of the disc and annulus, and they can be easily proved in the context of gentle algebras arising from $(m+2)$-angulations.

\begin{proposicion}\label{m angulaciones} Let $(Q_\Tt,I_\Tt)$ be a bound quiver arising from a $(m+2)$-angulation.
\begin{enumerate}
\item $\Lambda_\Tt = k Q_\Tt/I_\Tt$ is a gentle algebra.
\item The only possible saturated cycles in $(Q_\Tt,I_\Tt)$ are $(m+2)$-cycles.
\item There can be at most $m - 1$ consecutive zero-relations not lying in a saturated cycle.
\end{enumerate}

\end{proposicion}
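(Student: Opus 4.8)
The plan is to read everything off the purely combinatorial description of $(Q_\Tt,I_\Tt)$ recalled in Example \ref{ejemplo1}: the vertices are the $m$-diagonals of $\Tt$, an arrow $i\to j$ records that $x_j$ is the edge immediately following $x_i$ clockwise inside a common $(m+2)$-gon, and a length-two path $\alpha\beta$ lies in $I_\Tt$ exactly when the three diagonals $x_i,x_j,x_k$ it passes through are edges of one and the same $(m+2)$-gon. The geometric fact I would use repeatedly is that two distinct edges of the angulation sharing an endpoint and adjacent there belong to a unique common $(m+2)$-gon; in the type-$\tAa$ case I would carry out the local analysis in the universal cover $\Sigma$ and observe that it descends to the annulus.

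For item (1) I would check the four gentle axioms locally. Each $m$-diagonal $x_i$ has two endpoints and bounds two $(m+2)$-gons, so at each endpoint it has a well-defined clockwise neighbour; counting the neighbouring diagonals shows at most two arrows start and at most two end at $i$, giving (G1). Axiom (G2) is immediate, since $I_\Tt$ is by construction generated by the length-two paths above. For (G3), fixing an arrow $\beta\colon j\to k$, a relation $\alpha\beta\in I_\Tt$ forces $x_i$ to be the unique edge preceding $x_j$ clockwise in the gon containing $x_j,x_k$, so $\alpha$ is unique; symmetrically the $\gamma$ with $\beta\gamma\in I_\Tt$ is unique. Axiom (G4) is the dual statement at the other gon incident to the shared marked point, and the same uniqueness of clockwise neighbours applies.

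For item (2), let $x_1\to\cdots\to x_n\to x_1$ be a saturated cycle. Each relation $\alpha_i\alpha_{i+1}\in I_\Tt$ places $x_{i-1},x_i,x_{i+1}$ in a common gon, and since consecutive relations overlap in two diagonals, the unique-gon fact forces all the $x_i$ to be edges of one fixed $(m+2)$-gon $G$. The arrows then list the edges of $G$ in strict clockwise cyclic order, so the cycle closes only after traversing all $m+2$ edges of $G$; hence $n=m+2$. For item (3), the same overlap argument confines any chain of consecutive zero-relations to a single gon $G$. If this chain does not lie in a saturated cycle, then by item (2) not every side of $G$ is a diagonal, so at least one side of $G$ is a boundary segment; the diagonals hit by the chain then form a linearly ordered arc of at most $m+1$ edges, which carries at most $m$ arrows and therefore at most $m-1$ consecutive relations. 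This bound is exactly what feeds the estimate $d\le m$ of the next theorem through Theorem \ref{lema GR}.

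The routine bookkeeping is concentrated in item (1); the only genuine care is needed along the boundary of $\Pi$ and $\Sigma$, where a side of an $(m+2)$-gon may be a boundary segment rather than an $m$-diagonal and hence not a vertex of $Q_\Tt$, so one must track which polygon sides actually produce arrows. I expect the main (though still mild) obstacle to be setting up the ``two adjacent edges lie in a unique $(m+2)$-gon'' statement uniformly for both the disc and the universal cover of the annulus, after which items (2) and (3) follow by the same counting around a single $(m+2)$-gon.
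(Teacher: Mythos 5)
Your proof is correct, and it supplies in full the argument that the paper itself omits: the paper only remarks that these properties ``were observed in [Mu, Rem.\ 2.18] and [Gub, Sec.\ 7]... and can be easily proved,'' giving no proof. Your combinatorial verification -- uniqueness of the $(m+2)$-gon containing two adjacent edges, which yields (G1)--(G4) locally, forces a saturated cycle to enumerate all $m+2$ sides of a single gon, and bounds a non-cyclic chain of relations by the $\le m+1$ consecutive diagonal sides of a gon with a boundary side -- is exactly the intended argument, with the universal-cover reduction handling the annulus case appropriately.
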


Immediately, we have the following observation. 

\begin{lema}\label{lema1} Let $\Lambda_\Tt = k Q_\Tt/I_\Tt$ be an algebra arising from a $(m+2)$-angulation. Then, $\Lambda_\Tt$ is Gorenstein of dimension $d\leq m$.
\end{lema}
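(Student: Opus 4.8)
The plan is to invoke the Geiss--Reiten formula (Theorem \ref{lema GR}), which computes the Gorenstein dimension of an arbitrary gentle algebra from the maximal length $n(\Lambda_\Tt)$ of a critical path. Since $\Lambda_\Tt$ is gentle by Proposition \ref{m angulaciones}(1), it suffices to prove $n(\Lambda_\Tt)\le m$ and then read off the conclusion. If this bound holds, Theorem \ref{lema GR} gives $\injdim\Lambda_\Tt=n(\Lambda_\Tt)=\projdim D(\Lambda_\Tt^{op})\le m$ when $n(\Lambda_\Tt)>0$, while $\injdim\Lambda_\Tt=\projdim D(\Lambda_\Tt^{op})\le 1\le m$ when $n(\Lambda_\Tt)=0$; in either case $\Lambda_\Tt$ is Gorenstein of dimension $d\le m$.

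So first I would reduce the whole statement to bounding critical paths. Recall a critical path $\alpha_1\cdots\alpha_n$ is a path formed by consecutive relations ($\alpha_i\alpha_{i+1}\in I$ for $1\le i<n$) whose initial arrow $\alpha_1$ is gentle, and $n(\Lambda_\Tt)$ is the supremum of the lengths $n$ of such paths. Such a path contributes exactly the $n-1$ consecutive zero-relations $\alpha_1\alpha_2,\dots,\alpha_{n-1}\alpha_n$. The aim is therefore to place each critical path within the scope of Proposition \ref{m angulaciones}(3), which asserts that at most $m-1$ consecutive zero-relations can fail to lie in a saturated cycle.

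The substantive step, and the one I expect to be the main obstacle, is the claim that \emph{no} arrow of a critical path lies in a saturated cycle, so that its $n-1$ relations are all of the type counted by Proposition \ref{m angulaciones}(3). I would argue this from gentleness. Every arrow $\beta$ occurring in a saturated cycle has a cycle-predecessor $\gamma$ with $\gamma\beta\in I$, so no arrow of a saturated cycle is gentle; in particular $\alpha_1$ is not in any saturated cycle. Suppose for contradiction that some $\alpha_i$ of the critical path lies in a saturated cycle and take $i$ minimal, so $i\ge 2$. In that cycle $\alpha_i$ has a predecessor $\gamma$ with $\gamma\alpha_i\in I$; but also $\alpha_{i-1}\alpha_i\in I$, and by condition (G3) the arrow $\alpha$ with $\alpha\alpha_i\in I$ is unique, forcing $\alpha_{i-1}=\gamma$ to lie in the same saturated cycle and contradicting the minimality of $i$. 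Hence every $\alpha_i$ avoids saturated cycles.

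Once this is established the bookkeeping is immediate: the $n-1$ relations of a critical path are consecutive zero-relations none of which lie in a saturated cycle, so Proposition \ref{m angulaciones}(3) yields $n-1\le m-1$, that is $n\le m$, whence $n(\Lambda_\Tt)\le m$. Combined with Theorem \ref{lema GR} as above, this shows $\Lambda_\Tt$ is Gorenstein of dimension $d\le m$. I anticipate no difficulty in the final case analysis, since the case $n(\Lambda_\Tt)=0$ only strengthens the bound.
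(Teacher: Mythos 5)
Your proposal is correct and follows essentially the same route as the paper: reduce via the Geiss--Reiten theorem to bounding the length of critical paths, show that no arrow of a critical path lies in a saturated cycle, and then apply Proposition \ref{m angulaciones}(3) to get $n(\Lambda_\Tt)\le m$. The only difference is that you justify the key claim (that a critical path avoids saturated cycles) with an explicit (G3)-plus-minimality argument, whereas the paper simply asserts it after noting that the gentle arrow $\alpha_1$ cannot lie in a saturated cycle; your version is a fleshed-out form of the same step.
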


\begin{proof}

The case $m=1$ follows from Proposition \ref{prop KR}, and also from \cite[Lemma 2.6]{ABCP}. Let $m\geq 2$. Since $\Lambda_\Tt$ is gentle, we can apply Theorem \ref{lema GR}. First assume that there is no gentle arrow in $(Q_\Tt,I_\Tt)$, then $n(\Lambda_\Tt)=0$, so $d$ is zero or one and $d \leq m$. The statement follows.
\\
Now, assume there are gentle arrows in $(Q_\Tt,I_\Tt)$, and let $\alpha_1$ be one of them. It follows that $\alpha_1$ is not part of a saturated cycle. Let $\alpha_1 \ldots \alpha_r$ be a critical path, since $\alpha_1$ is not part of a saturated cycle, then none of the arrows $\alpha_i$ for $1 \leq i \leq r$ is part of a saturated cycle. By Proposition \ref{m angulaciones} (3), the maximal number of consecutive zero-relations outside of a saturated cycle is $m-1$. Therefore, $r \leq m$, and by Theorem \ref{lema GR}, $\Lambda_\Tt$ is Gorenstein of dimension $d \leq m$.\end{proof}

Most of the arguments in the following lemma can be found also in \cite[Section 4]{Ka}.

\begin{lema}\label{lemaK} Let $\Lambda =kQ/I$ be a gentle algebra of Gorenstein dimension $d \geq 1$. Let $x\in Q_0$, and let $N$ be an indecomposable direct summand of $\rad P(x)$. Then,
\begin{enumerate}
\item[(a)] $N \in \uCMP (\Lambda)$, or
\item[(b)] $\projdim N \leq d-1$.
\end{enumerate}

\end{lema}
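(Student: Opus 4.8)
The plan is to analyze the structure of an indecomposable summand $N$ of $\rad P(x)$ using the string description of gentle algebras, and to decide its membership in $\uCMP(\Lambda)$ according to whether it is built from a maximal path starting at some vertex of a saturated cycle. Recall that for $x \in Q_0$ the projective $P(x)$ is the string module $M(p^{-1} q)$, where $p$ and $q$ are the two maximal nonzero paths emanating from $x$ (possibly one of them trivial, by gentleness there are at most two arrows leaving $x$). Consequently $\rad P(x)$ decomposes as a direct sum of the uniserial string modules $M(p')$ and $M(q')$, where $p'$, $q'$ are the maximal paths obtained by deleting the first arrow. So without loss of generality $N = M(u)$ where $u$ is a maximal path in $Q$ starting at some vertex $y = t(\text{first arrow out of } x)$.

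First I would invoke Theorem \ref{teorema de kalck}: the indecomposable Cohen-Macaulay modules are exactly the $M(u_i)$, where $u_i$ is the maximal path starting at a vertex $x_i$ lying on a saturated cycle. So the dichotomy I want to establish is: either the vertex $y$ starting the maximal path $u$ lies on a saturated cycle (case (a), giving $N \in \uCMP(\Lambda)$ by Kalck's theorem), or it does not (case (b), where I must bound the projective dimension). In the first case there is essentially nothing left to prove, since $u$ is then the maximal path $u_i$ of Figure \ref{kalck1} and $N = M(u_i)$ is Cohen-Macaulay by hypothesis.

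The substantive case is (b): $y$ does not lie on a saturated cycle, and I must show $\projdim N \leq d-1$. Here I would compute the minimal projective resolution of the uniserial string module $N = M(u)$ directly. Since $N$ is uniserial with top $S(y)$, its projective cover is $P(y)$ and the syzygy $\Omega N$ is again a direct sum of maximal path modules $M(u')$ starting at vertices reached by following the string past $y$. The key structural input is that $y$ lies outside every saturated cycle, so iterating the syzygy produces a string of maximal paths governed by consecutive zero-relations that form a critical path; because no saturated cycle intervenes, this chain is finite. Each application of $\Omega$ strips off one arrow worth of consecutive relation, so the process terminates in at most $n(\Lambda)$ steps. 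Since $d \geq 1$ and by Theorem \ref{lema GR} we have $n(\Lambda) = d$ when $n(\Lambda) > 0$, the resolution of $N$ has length at most $d-1$ (the $-1$ coming from the fact that $N$ already sits inside $\rad P(x)$, i.e. $N = \Omega$ of a simple-topped object, so one syzygy step has been absorbed). By Remark \ref{rema gorenstein}, over a Gorenstein algebra of dimension $d$ a module has either infinite projective dimension or projective dimension at most $d$; combined with finiteness of the chain we conclude $\projdim N \leq d-1$.

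The main obstacle I anticipate is the bookkeeping that separates the two cases cleanly and justifies the exact bound $d-1$ rather than $d$. Precisely, I must argue that a maximal path $M(u)$ whose starting vertex is \emph{not} on a saturated cycle cannot be Cohen-Macaulay (so case (a) really does require the saturated-cycle hypothesis), which follows from the ``moreover'' clause of Theorem \ref{teorema de kalck} asserting that \emph{all} indecomposables in $\uCMP(\Lambda)$ arise from saturated cycles. I would also need to handle the degenerate subcase where the first arrow out of $x$ is a gentle arrow, so the critical path governing the syzygy chain starts immediately at $y$; there the length count must be tracked carefully against $n(\Lambda) = d$ to extract the $d-1$ bound. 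Aside from this indexing care, the argument is a routine syzygy computation for uniserial string modules, so I do not expect any deep difficulty beyond the case analysis.
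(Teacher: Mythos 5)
Your decomposition of $\rad P(x)$ is correct, and the mechanism you propose for case (b) (a chain of consecutive zero-relations that must terminate, plus the observation that $N$ is itself a first syzygy to gain the $-1$) is essentially the paper's argument. The genuine gap is where you draw the dichotomy. You split according to whether the \emph{vertex} $y=t(\alpha)$ lies on a saturated cycle, and in that case assert that $u$ is the path $u_i$ of Theorem \ref{teorema de kalck}, so that $N\in\uCMP(\Lambda)$. But a summand of $\rad P(x)$ determines not just the vertex $y$ but a specific one of the (up to two) maximal paths leaving $y$, namely the one whose first arrow composes nontrivially with $\alpha\colon x\to y$. Kalck's module $M(u_i)$ is the maximal path pointing \emph{off} the cycle, and you obtain that one precisely when $\alpha$ is the cycle arrow into $y$. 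If $\alpha$ is the other arrow into $y$, conditions (G3)--(G4) force $u$ to begin with the cycle arrow \emph{out of} $y$, so $M(u)\neq M(u_i)$ and, by the ``moreover'' clause of Theorem \ref{teorema de kalck}, $N\notin\uCMP(\Lambda)$. Concretely, take $Q$ with arrows $0\xra{\alpha}2$, $1\xra{a}2\xra{b}3\xra{c}1$, $2\xra{g}4$ and $I=\langle ab,\,bc,\,ca,\,\alpha g\rangle$: this is gentle of Gorenstein dimension $d=2$ (the critical path is $\alpha g$), and $\rad P(0)=M(b)$ is the indecomposable uniserial module with top $S(2)$ and socle $S(3)$. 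The vertex $2$ lies on the saturated $3$-cycle, yet $\Ext^1_\Lambda(M(b),P(4))\neq 0$, so $M(b)\notin\uCMP(\Lambda)$; instead $\projdim M(b)=1=d-1$. Your case (a) gives the wrong conclusion here, and your case (b) argument does not cover it since it assumes $y$ avoids all saturated cycles.

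The repair is to key the dichotomy to the \emph{arrow} $\alpha$, as the paper does: if $\alpha$ lies on a saturated cycle then $N=M(u_{i+1})$ is Cohen--Macaulay by Theorem \ref{teorema de kalck}; if not, then (for $N$ non-projective) the second arrow $\delta_1$ out of $t(\alpha)$ satisfies $\alpha\delta_1\in I$, and no arrow of the resulting chain $\alpha\delta_1\delta_2\cdots$ of consecutive relations can lie on a saturated cycle (by (G3) that would drag $\alpha$ onto the cycle), so the chain terminates, $\projdim N<\infty$, hence $\projdim N\le d$ by Remark \ref{rema gorenstein}, and finally $\projdim N\le d-1$ because $N=\Omega M(\beta w)$ for the other leg $\beta w$ of $P(x)$. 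A secondary point: your alternative count ``the chain has length at most $n(\Lambda)$'' is not justified as stated, since $\alpha$ need not be a gentle arrow and so $\alpha\delta_1\cdots\delta_n$ need not be a critical path; it is the finiteness-plus-Gorenstein argument, not the comparison with $n(\Lambda)$, that delivers the bound.
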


\begin{proof} If $N$ is projective, we are in case (b). Let $N$ be non projective. Let  $P(x)=M(u^{-1} \alpha^{-1} \beta w)$ be the indecomposable projective and $N=M(u)$ so that $S(t(\alpha))= \ttop M(u)$. We study the cases:

\begin{enumerate}
\item[(i)] $\alpha $ is part of a saturated cycle $x_1 \ra \cdots \ra x_i \xra{\alpha} x_{i+1} \cdots \ra  x_1$.
\item[(ii)] $\alpha$ is not part of a saturated cycle.
\end{enumerate}
(i) Let $x_i\xra{\alpha} x_{i+1}$, then $M(u)$ is a direct summand of $\rad P(x_{i})$. By Theorem \ref{teorema de kalck}, $M(u)\in \uCMP (\Lambda)$.
\\
(ii) Since $N=M(u)$ is not projective, there exists an arrow $\delta_1 $ such that $\alpha \delta_1 \in I$. Also $\delta_1$ is not part of a saturated cycle, if it were the case also $\alpha$ would be part of the saturated cycle.
Let $P(t(\alpha))= M(c^{-1} \delta_1^{-1} u)$, then there is an exact sequence
\begin{equation}\label{secdelta1}
0 \ra M(c) \ra P(t(\alpha)) \ra M(u) \ra 0.
\end{equation}
If the string module $M(c)$ is not projective, then it satisfies the same conditions as $M(u)$, so we can construct a new exact sequence

\begin{equation}\label{secdelta2}
0 \ra M(c_1) \ra P(t(\delta_1)) \ra M(c) \ra 0.
\end{equation}
Recursively, we obtain a path $\alpha \delta_1 \cdots \delta_n$ such that each quadratic factor belongs to $I$.
This process has to finish after a finite number of steps, being the direct summand $M(c_n)$ of $P(t(\delta_{n-1}))$ a projective module. If there were not finite steps and $M(c_n)$ was not projective, we would find new arrows $\delta_{n+1}, \ldots$ and form a path $\alpha \delta_1 \cdots \delta_n \cdots$ such that each quadratic factor is in $I$. The quiver $Q$ is finite, so the only way to construct an infinite path $\alpha \delta_1 \cdots \delta_n \cdots$ is reaching a saturated cycle. By the gentleness, if one of the arrows $\delta_i$ is in a saturated cycle, then all $\alpha,\delta_1, \ldots , \delta_n$ are in the saturated cycle, this contradicts the condition imposed on $\alpha$. Therefore the procedure to find the short exact sequences in Equations (\ref{secdelta1}), (\ref{seccion 2}), stops. The short exact sequences are the steps needed to find a minimal projective resolution for $M(u)$, that is finite, so $\projdim M(u) < \infty$. By Remark \ref{rema gorenstein}, we have $\projdim M(u) \leq d$. Now, we can also express $M(u)$ as $M(u)=\Omega M(\beta w)$. If we had $\projdim M(u)=d$, then we would have $\projdim M(\beta w)=d+1$ and this is impossible by Remark \ref{rema gorenstein}. Thus, $\projdim M(u)\leq d-1$. \end{proof}

To complete the previous lemma, observe that if $\Lambda$ is selfinjective (that is $\Lambda$ is Gorenstein of dimension zero) then, by definition, every indecomposable module is either projective or $\uCMP$. 

\smallskip

The next theorem is the main result of this section.

\begin{teorema}\label{main} Let $\Lambda_\Tt = Q_\Tt /I_\Tt$ be an algebra arising from a $(m+2)$-angulation and let $N$ be a $\Lambda_\Tt$-module. Then, $N \in \uCMP(\Lambda_\Tt) $ if and only if $\Omega^{m+1}\tau N \simeq N$.
\end{teorema}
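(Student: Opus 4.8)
The plan is to prove both implications separately, using the structural results already established. For the characterization, the key tool is Theorem \ref{teorema ralf}'s analogue for the singularity category (Remark \ref{rema gorenstein}): since $\Lambda_\Tt$ is Gorenstein of dimension $d \leq m$ by Lemma \ref{lema1}, a module $N$ lies in $\uCMP(\Lambda_\Tt)$ precisely when it is a $d$-th syzygy, equivalently when $\Omega^j N \neq 0$ for all $j$ (infinite projective dimension, after discarding projective summands). The first step is to understand the syzygy dynamics on indecomposable Cohen-Macaulay modules, which by Theorem \ref{teorema de kalck} are exactly the string modules $M(u_i)$ coming from saturated cycles. By Proposition \ref{m angulaciones}(2), all saturated cycles are $(m+2)$-cycles, so I would first compute $\Omega M(u_i)$ explicitly for such a cycle.

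First I would carry out the syzygy computation on a saturated $(m+2)$-cycle $x_1 \xra{\alpha_1} \cdots \xra{\alpha_{m+1}} x_{m+2} \xra{\alpha_{m+2}} x_1$. Mimicking the computation in the proof of Lemma \ref{lema loops}, the projective cover of $M(u_i)$ is $P(x_i) = M(u_i^{-1}\alpha_i u_{i+1})$, and taking the syzygy should yield $\Omega M(u_i) = M(u_{i+1})$, possibly up to projective summands contributed by the maximal non-cycle paths $u_j$. The crucial point is that after going once around the $(m+2)$-cycle the string returns to its start, so I expect $\Omega^{m+2} M(u_i) \simeq M(u_i)$ in the stable category, equivalently $\Omega$ acts as a cyclic permutation of period dividing $m+2$ on the indecomposable Cohen-Macaulay modules attached to a single cycle. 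Combining this with the AR-translation computation (again parallel to Lemma \ref{lema loops}, where one finds $\tau M(u_i) = M(v_{i+1})$ and relates $v$-strings back to $u$-strings) should give the identity $\Omega^{m+1}\tau M(u_i) \simeq M(u_i)$ on each indecomposable Cohen-Macaulay module, which by additivity of $\Omega$ and $\tau$ (modulo projectives) proves the forward implication $N \in \uCMP(\Lambda_\Tt) \Rightarrow \Omega^{m+1}\tau N \simeq N$.

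For the converse, suppose $\Omega^{m+1}\tau N \simeq N$ with $N$ indecomposable non-projective; I want to conclude $N \in \uCMP(\Lambda_\Tt)$. The isomorphism forces $\Omega^{k(m+1)}\tau N \simeq N$ for all $k$, so in particular $\Omega^j N \neq 0$ for arbitrarily large $j$ (the orbit cannot collapse to zero), giving $\projdim N = \infty$. By Remark \ref{rema gorenstein}, a module over a Gorenstein algebra of dimension $d$ either has finite projective dimension $\leq d$ or is Cohen-Macaulay; hence $N \in \uCMP(\Lambda_\Tt)$. The remaining care is to rule out the degenerate possibilities where $N$ has finite projective dimension yet formally satisfies the syzygy identity: here Lemma \ref{lemaK} is the decisive input, since it shows every non-projective indecomposable summand of a radical is either Cohen-Macaulay or has projective dimension $\leq d-1$, and the latter modules are killed by $\Omega^d$, so they cannot satisfy $\Omega^{m+1}\tau N \simeq N$ with $N \neq 0$.

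The main obstacle will be the forward-direction bookkeeping: verifying that the period of $\Omega$ on the cycle strings is exactly $m+2$ rather than a proper divisor, and that the AR-translation $\tau$ contributes precisely the one extra shift needed so that $\Omega^{m+1}\tau$ (and not $\Omega^{m+2}$) is the identity. This requires tracking how the maximal auxiliary strings $u_j, v_j, w_j$ interact under taking projective covers and kernels around an $(m+2)$-cycle, and confirming that all the extraneous direct summands appearing in intermediate syzygies have finite projective dimension and thus vanish in $\uCMP(\Lambda_\Tt)$, exactly as the $M(w'_{i+1})$ summand vanished in Lemma \ref{lema loops}. Once the single-cycle computation is pinned down, extending to general $N$ is routine by decomposing into indecomposables and invoking the additivity of all functors involved modulo projectives.
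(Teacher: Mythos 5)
Your overall route matches the paper's: reduce to the indecomposable Cohen--Macaulay modules $M(u_i)$ via Theorem \ref{teorema de kalck}, observe that $\Omega$ acts with period $m+2$ on them, compute $\tau M(u_i)=M(v_{i+1})$ with the Nakayama functor, and then control the extra summand. But there is a genuine gap at exactly the point you defer as ``bookkeeping''. After computing $\Omega\tau M(u_i)=M(w'_{i+1})\oplus M(u_{i+2})$, the whole theorem reduces to showing $\Omega^{m}M(w'_{i+1})=0$. You propose to dispose of this summand ``exactly as the $M(w'_{i+1})$ summand vanished in Lemma \ref{lema loops}''; but in that lemma the vanishing was not proved, it was \emph{deduced from the hypothesis} $\Omega^2\tau M\simeq M$, which is precisely the statement being established here, so that argument is circular in the present context. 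Lemma \ref{lemaK} only supplies a dichotomy: $M(w'_{i+1})$, being a summand of $\rad P(y_{i+1})$, either has $\projdim\leq m-1$ (good) or lies in $\uCMP(\Lambda_\Tt)$ (fatal, since then $\Omega^{m}M(w'_{i+1})\neq 0$ and the formula fails). The substantive content of the paper's proof is ruling out the second branch, by a case analysis on the arrow $\gamma$ from a saturated $(m+2)$-cycle hitting $\ttop M(w'_{i+1})$: one case contradicts the shape of the injective $I(x_{i+1})=M(v_i\alpha_i v_{i+1}^{-1})$, the other contradicts gentleness together with $w'_{i+1}$ being a submodule of $\rad P(y_{i+1})$. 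Your proposal contains no argument for this step, and it is the only place where the specific structure of $(m+2)$-angulations (all saturated cycles have length $m+2$) actually enters.

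Second, your converse is justified incorrectly. You argue that $\Omega^{m+1}\tau N\simeq N$ forces $\projdim N=\infty$ and then conclude $N\in\uCMP(\Lambda_\Tt)$; but over a Gorenstein algebra infinite projective dimension does not imply Cohen--Macaulay (indeed $\tau M(u_i)=M(v_{i+1})$ typically has infinite projective dimension without being CM, since its syzygy contains the CM summand $M(u_{i+2})$). The correct one-line argument, which is the one the paper gives and which your own opening paragraph already states, is that the isomorphism exhibits $N$ as an $(m+1)$-st syzygy, hence a $d$-th syzygy for $d\leq m$, hence Cohen--Macaulay by Remark \ref{rema gorenstein}. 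Lemma \ref{lemaK} is not needed there; its real role is in the forward direction described above.
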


\begin{proof}
Let $M$ be an indecomposable module in $\uCMP(\Lambda_\Tt)$, by Theorem \ref{teorema de kalck}, $M=M(u_i)$ where $u_i$ is the maximal non-zero path starting at $x_i$.

\begin{center}

\begin{tikzcd}[column sep=small]
\ \arrow[d,rightsquigarrow,"v_{i}"]& & \  \arrow[d,rightsquigarrow,"v_{i+1}"] & & \ \arrow[d,rightsquigarrow,"v_{i+2}"] \\ x_i \arrow[rr,"\alpha_i"] \arrow[d,rightsquigarrow,"u_i"] & & x_{i+1} \arrow[rr,"\alpha_{i+1}"] \arrow[d,rightsquigarrow,"u_{i+1}"]& & x_{i+2}\arrow[d,rightsquigarrow,"u_{i+2}"] \\
\ & & \ & & \
\end{tikzcd}
\end{center}
We compute a minimal projective presentation of $M(u_i)$, as we did in Lemma \ref{lema loops}.
\[\xymatrix@C8pt@R8pt{&M( u_{i+1}^{-1} \alpha_{i+1} u_{i+2}) \ar[rr]^{p_1}\ar@{->>}[rd] && M( u_i^{-1} \alpha_i u_{i+1})
\ar[rr]&& M(u_i) \ar[rr]&&0.\\ M(u_{i+2}) \ar@{^{(}->}[ru]&&M(u_{i+1}) \ar@{^{(}->}[ru]}\]
Observe that $\Omega^t M(u_i) = M( u_{i+t})$, where $t$ is an integer considered modulo $m+2$. Applying Nakayama functor we get
 

\begin{displaymath}
	\xymatrix  @R=0.6cm  @C=0.6cm {
		0\ar[r] & M(v_{i+1})\ar@{^(->}[r] & M(v_i \alpha_i  v_{i+1}^{-1} )\ar[rr]^{\nu p_1}\ar@{^(->}[rd] && M(  v_{i-1}\alpha_{i-1} v_i^{-1} )\ar@{>>}[r]&M(v_{i-1}).\\
		&&& M(v_i)\ar@{>>}[ru] &&}
\end{displaymath}
Then, $ \tau M(u_i) = \ker \nu p_1 = M(v_{i+1})$. As in Lemma \ref{lema loops}, let $S(y_{i+1})= \ttop M(v_{i+1})$. Let $P(y_{i+1})= M(w^{-1}_{i+1} v_{i+1} \alpha_{i+1} u_{i+2})$ be the projective cover of $M(v_{i+1})$. 

\begin{center}

\begin{tikzcd}[column sep=small]
& & y_{i+1}  \arrow[d,rightsquigarrow,"v_{i+1}"]\arrow[rr,rightsquigarrow,"w_{i+1}"] & & \ \\
x_i \arrow[rr,"\alpha_i"] & & x_{i+1} \arrow[rr,"\alpha_{i+1}"] & & x_{i+2}\arrow[d,rightsquigarrow,"u_{i+2}"] \\
& & & & \
\end{tikzcd}
\end{center}
Therefore, $\Omega M(v_{i+1}) = M(w'_{i+1}) \oplus M(u_{i+2})$, where $M(w'_{i+1})$ is the maximal submodule of $M(w_ {i+1})$. The syzygy functor is additive, then
\begin{equation*}
\Omega^{m+1} \tau M(u_i) = \Omega^{m+1} M (v_{i+1}) = \Omega^m M(w'_{i+1}) \oplus \Omega^m M(u_{i+2}).
\end{equation*}
Since $\Omega^t M(u_i) = M( u_{i+t}) $, we have
\begin{equation*}
\Omega^m M(w'_{i+1}) \oplus \Omega^m M(u_{i+2}) = \Omega^m M(w'_{i+1}) \oplus M(u_i).
\end{equation*}
Now, we only need to prove that $\Omega^m M(w'_{i+1})=0$. Observe that $ M(w'_{i+1})$ is a direct summand of $\rad P(y_{i+1})$. 

\smallskip

We know, by Lemma \ref{lema1} that $\Lambda_\Tt$ is Gorenstein of dimension $d \leq m$. By Lemma \ref{lemaK} one of the following holds: 
\begin{enumerate}
\item $\projdim M(w'_{i+1}) \leq m-1$, or
\item $M(w'_{i+1}) \in \uCMP(\Lambda_\Tt)$. 
\end{enumerate}

If (1) holds, then $\Omega^m M(w'_{i+1})=0$ and we are done.

\smallskip

We assume (2) holds, so $M(w'_{i+1}) \in \uCMP(\Lambda_\Tt)$ and prove that this leads to a contradiction.  Let $z_{i+1}$ be the vertex such that $ \ttop M( w'_{i+1}) =S (z_{i+1})$. By the description in Theorem \ref{teorema de kalck}, the vertex $z_{i+1}$ is a target of an arrow $\gamma$ in a saturated $(m+2)$-cycle and $\gamma w'_{i+1}\neq 0$. 

\smallskip

(2a) If the arrow $\gamma$ is $ y_{i+1} \xra{\gamma} z_{i+1}$, see the figure bellow (left), then there is an arrow $a_j$ in the saturated $(m+2)$-cycle, such that  $a_j \gamma \in I_{\Tt}$. Then, $a_j v_{i+1} \neq 0$ and this contradicts that $I(x_{i+1}) = M( v_i \alpha_i  v_{i+1}^{-1} )$ is the indecomposable injective associated to $x_i$. Absurd.

\begin{center}

\begin{tikzcd}[column sep=small]
\ \arrow[d,"a_{j}"]& & \   & & \  \\ y_{i+1} \arrow[rr,"\gamma"] \arrow[d,rightsquigarrow,"v_{i+1}"] & & z_{i+1} \arrow[d,rightsquigarrow,"w'_{i+1}"] \arrow[rr,"a_{j+2}"]& & \  \\
\ & & \ & & \
\end{tikzcd}
\ \ \ \
\begin{tikzcd}[column sep=small]
\ \arrow[d,dashrightarrow,"a_j"] & & \ \arrow[d,"\gamma"]  & & \  \\ y_{i+1} \arrow[rr,"a_{j+1}"] \arrow[d,rightsquigarrow,"v_{i+1}"] & & z_{i+1} \arrow[rr,"b_{j+2}"] \arrow[d,rightsquigarrow,"w'_{i+1}"] & & \  \\
\ & & \ & & \
\end{tikzcd}
\end{center}

(2b) If the arrow $\gamma$ in a saturated cycle is such that $s(\gamma)\neq y_{i+1}$, see figure above (right), there is an arrow $b_{j+2}$ following the saturated cycle such that $\gamma b_{j+2} \in I_\Tt$. Thus, we have $\gamma w'_{i+1} \neq 0$ and by gentleness, $a_ {j+1}b_{j+2}\notin I_\Tt$.  But recall that $w'_{i+1}$ is a submodule of $\rad P(y_{i+1})$, so $b_{j+2}$ has to be the first arrow in the string $w'_{i+1}$ such that $M(w'_{i+1}) \in\uCMP (\Lambda_\Tt)$. Absurd.


\smallskip

Thus, $M(w'_{i+1}) \notin \uCMP (\Lambda_\Tt)$ and case (1) is the only possibility so $\Omega^{m+1} \tau M(u_i) = M(u_i)$. As $\Omega$ and $\tau$ are additive functors, if $N\in \uCMP (\Lambda_\Tt)$, then $\Omega^{m+1}\tau N =N$.

\smallskip

The converse affirmation can be proved easily. If $N = \Omega^{m+1} \tau N$, and $N\neq 0$ is not a projective $\Lambda_\Tt$-module, then $N$ is a $m$-th syzygy. By Remark \ref{rema gorenstein} this module is in $\uCMP(\Lambda_\Tt)$.\end{proof}

As a corollary, we obtain the next result that generalize the properties known for cluster-tilted algebras: Proposition \ref{prop KR} and Theorem \ref{teorema ralf}.

\begin{teorema}\label{teo m tipo a} Let $\Lambda$ be a $m$-cluster tilted algebra of type $\Aa$ or $\tAa$. Then,
\begin{enumerate}
\item $\Lambda$ is Gorenstein of dimension $d \leq m$.
\item $N \in \uCMP(\Lambda)$ if and only if $\Omega^{m+1} \tau N = N$.
\end{enumerate}
\end{teorema}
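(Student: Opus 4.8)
The plan is to deduce the statement entirely from the two results already established for algebras arising from $(m+2)$-angulations, namely Lemma \ref{lema1} and Theorem \ref{main}. The only genuine content lies in identifying an arbitrary $m$-cluster tilted algebra of type $\Aa$ or $\tAa$ with an algebra of the form $\Lambda_\Tt = kQ_\Tt/I_\Tt$ attached to a suitable $(m+2)$-angulation $\Tt$; once this identification is in place, both assertions are formal.

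First I would invoke the geometric realizations recalled in Section \ref{sect 1}. By the bijections listed there, an $m$-cluster tilting object $T$ in $\Cc^m_{\Aa}$ corresponds to an $(m+2)$-angulation $\Tt$ of the disk $\Pi$ (following \cite{Ba,Mu}), and an $m$-cluster tilting object in $\Cc^m_{\tAa}$ corresponds to an $(m+2)$-angulation of the strip $\Sigma$ (following \cite{Tol,Gub}). Moreover, under this correspondence the endomorphism algebra $\Lambda = \End_{\Cc^m_Q}(T)$ is isomorphic to the bound quiver algebra $\Lambda_\Tt = kQ_\Tt/I_\Tt$ described in Example \ref{ejemplo1}. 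Hence every $m$-cluster tilted algebra of type $\Aa$ or $\tAa$ is an algebra arising from an $(m+2)$-angulation in the precise sense used throughout this section.

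With this identification established, part (1) is immediate from Lemma \ref{lema1}, which guarantees that $\Lambda_\Tt$ is Gorenstein of dimension $d \leq m$. Likewise, part (2) is exactly the conclusion of Theorem \ref{main} applied to $\Lambda_\Tt$: a module $N$ lies in $\uCMP(\Lambda_\Tt)$ if and only if $\Omega^{m+1}\tau N \simeq N$. Thus the theorem follows verbatim once the abstract $m$-cluster tilted algebra has been replaced by its combinatorial presentation $\Lambda_\Tt$.

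I do not anticipate any substantial obstacle, as the result is a formal consequence of the machinery built up in this section; it serves precisely to package Lemma \ref{lema1} and Theorem \ref{main} in the language of $m$-cluster tilted algebras, generalizing Proposition \ref{prop KR} and Theorem \ref{teorema ralf} from the case $m=1$. The only step demanding care is the correct citation of the geometric correspondence, since it is what licenses passing from $\End_{\Cc^m_Q}(T)$ to $\Lambda_\Tt$; this uses that in types $\Aa$ and $\tAa$ every $m$-cluster tilting object comes from an $(m+2)$-angulation, so that no $m$-cluster tilted algebra of these types is missed.
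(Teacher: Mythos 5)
Your proposal is correct and matches the paper's own proof, which simply cites Lemma \ref{lema1} for part (1) and Theorem \ref{main} for part (2); you merely make explicit the identification of $\End_{\Cc^m_Q}(T)$ with $\Lambda_\Tt$ via the geometric bijections, which the paper leaves implicit from Section \ref{sect 1}.
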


\begin{proof}
Part (1) follows from Lemma \ref{lema1}. Part (2) follows from Theorem \ref{main}. 
\end{proof}

\subsection{On the Gorenstein property}

It is known that Theorem \ref{teo m tipo a} does not hold in general for $d$-CY tilted algebras. In \cite[Section 5.3]{KR} there is an example (due to Iyama) of a $d$-CY tilted algebra that is not Gorenstein. Moreover, a recent preprint \cite{Lad2} shows that all finite dimensional $k$-algebras are $d$-CY tilted for some $d > 2$.

\smallskip

Still, there are results in this subject due to Keller and Reiten \cite[Section 4.6]{KR2}, and Beligiannis \cite[Theorem 6.4]{Be15}. Both results ask $\add T$ to be \emph{corigid} in some degree, that is there exist a non negative integer $u$ such that $\Hom_\Cc (\add T, \add T [-t])=0$ for all $1\leq t\leq u$, to conclude that $\End_\Cc(T)$ is Gorenstein.





The $m$-cluster categories $\Cc^m_Q$ of types $\Aa$ and $\tAa$ are special cases of triangulated $(m+1)$-CY categories and the subcategories $\add T$  are $(m+1)$-cluster tilting subcategories, as we pointed out in Remark \ref{rema10}. In the next example we show that the subcategory $\add T$ might not be corigid, hence this result is independent of the mentioned above. 

\begin{ejemplo}\label{ejemploA} Let $\Cc^2_Q$ be the $2$-cluster category, where $Q$ is of type $\Aa_4$ and $T$ is in Figure \ref{A2}. 

\begin{figure}[h!]
\centering
\def\svgwidth{4in}
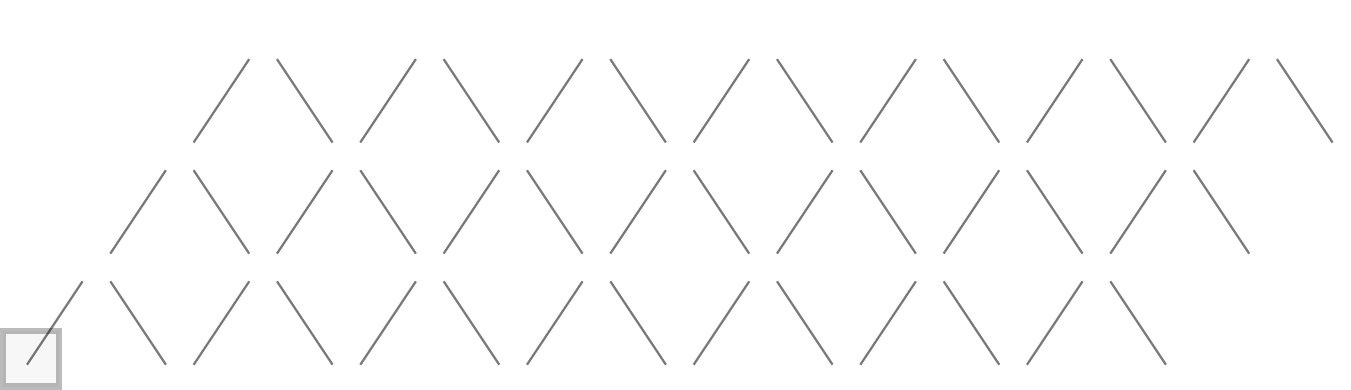
\caption{\sf $2$-cluster tilting object in $\Cc^2_Q$.}
\label{A2}
\end{figure}
\end{ejemplo}

The subcategory $\add T$ in Example \ref{ejemploA} is not corigid since $\Hom(T_3,T_1[-1]) \simeq \Hom (T_3[1],T_1) \neq 0$. By Theorem \ref{teo m tipo a} the $2$-cluster tilted algebra $\End(T)$ is Gorenstein of dimension at most two. In fact, in this example the algebra is of global dimension two. In the next example we see that there are other examples of Dynkin $m$-cluster-tilted algebras where Theorem \ref{teo m tipo a} holds.

\begin{ejemplo}\label{ejemploD} Let $\Cc^2_Q$ be the $2$-cluster category of type $\mathbb{D}_6$, and $T=\oplus_{i=1}^6 T_i$ the $2$-cluster tilting object in Figure \ref{D2}.

\begin{figure}[h]
\centering
\def\svgwidth{5.8in}
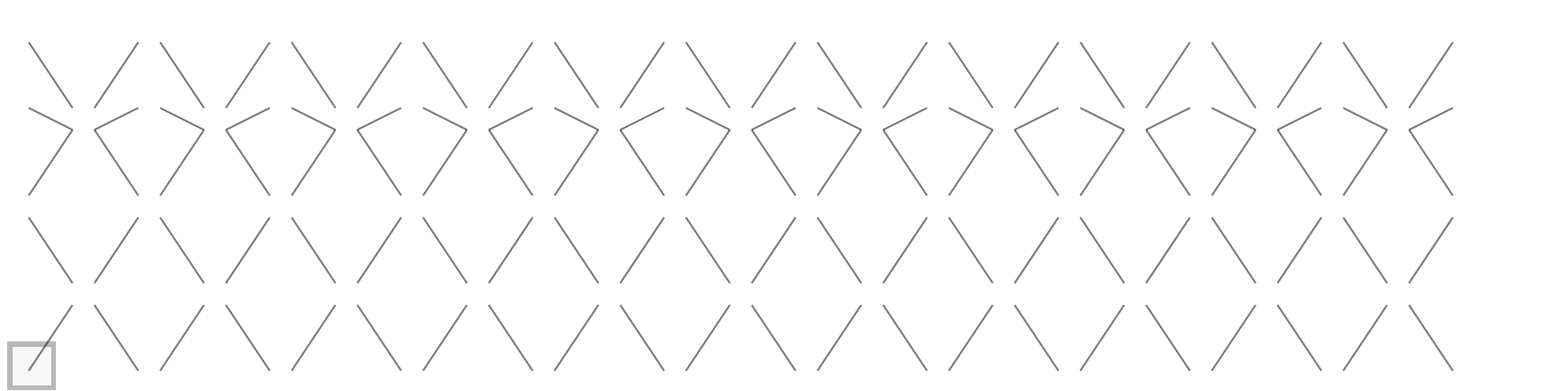
\caption{\sf $2$-cluster tilting object such that $\Hom (T_5, T_3 [-1])=\Hom(T_5[1],T_3) \neq 0$, so $\add T$ is not corigid.}
\label{D2}
\end{figure}
\end{ejemplo}

The algebra $\Lambda=\End_{\Cc^2_Q}(T)$ in Example \ref{ejemploD} is defined by the quiver in Figure \ref{Q ejemplo} modulo the ideal $I= \langle \lambda \alpha, \alpha \beta \gamma, \beta \gamma \delta, \delta \lambda  \rangle$. We show the corresponding AR quiver in Figure \ref{Gamma ejemplo}. We find that $\Lambda$ is Gorenstein of dimension 2 and has infinite global dimension. Also, the modules in $\uCMP(\Lambda)$ are $3,6, \begin{array}{c}
5 \\ 
4
\end{array}, \begin{array}{c}
2 \\ 
1
\end{array}$, exactly those such that $\Omega^{3} \tau N = N$.

\begin{figure}[h!]
\begin{tikzcd}[column sep=.2in, row sep=.18in]
1 && 2 \arrow[ll,"\epsilon"] \arrow[d,"\lambda"] && 3 \arrow[ll,"\delta"] && 4 \arrow[ll, "\gamma"] \\
&& 6 \arrow[rr,"\alpha"] && 5 \arrow[urr,"\beta"']
\end{tikzcd}
\caption{\sf $\Lambda = kQ/I$, Example \ref{ejemploD}}
\label{Q ejemplo}
\end{figure}
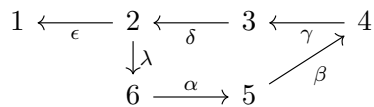
\begin{figure}[h!]

\begin{tikzcd}[column sep=.1in, row sep=.06in]
\vert 1 \arrow[dr] &&  \begin{tabular}{c|}
$2$ \\ 
$6$
\end{tabular} \arrow[dr] \\
& \begin{tabular}{|c}
$2$ \\ 
$16$
\end{tabular} \arrow[dr] \arrow[ur]  && 2 \arrow[dr] && 3 \arrow[dr] && 4 \arrow[dr] && 5 \arrow[dr] && 6 \\
6 \arrow[ur] && \begin{array}{c}
2 \\ 
1
\end{array} \arrow[dr] \arrow[ur] && \begin{array}{c}
3 \\ 
2
\end{array} \arrow[ur] \arrow[dr] && \begin{array}{c}
4 \\ 
3
\end{array} \arrow[dr] \arrow[ur] && \begin{array}{c}
5 \\ 
4
\end{array} \arrow[dr] \arrow[ur] && \begin{array}{c|}
6 \\ 
5
\end{array} \arrow[ur] \\
&&& \begin{tabular}{|c}

$3$ \\ 

$2$ \\ 

$1$ \\ 

\end{tabular} \arrow[ur] \arrow[r]& \begin{array}{|c|}
4 \\ 
3 \\ 
2 \\ 
1
\end{array}  \arrow[r] & \begin{array}{c|}
4 \\ 
3 \\ 
2
\end{array} \arrow[ur] && \begin{array}{|c|}
5 \\ 
4 \\ 
3
\end{array}  \arrow[ur] && \begin{array}{|c|}
6 \\ 
5 \\ 
4
\end{array} \arrow[ur] \end{tikzcd}
\caption{\sf $\Gamma(\mod \Lambda)$, Example \ref{ejemploD}}
\label{Gamma ejemplo}
\end{figure}
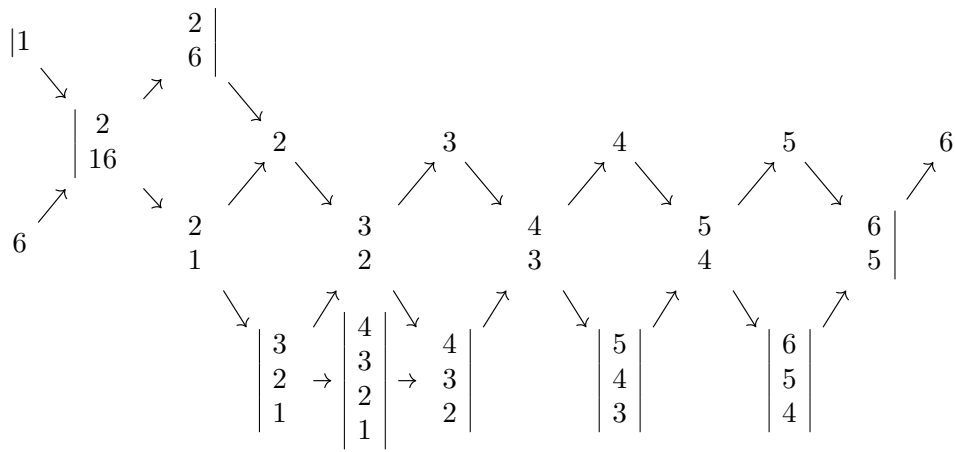

\newpage

{} 



\end{document}